\CompileMatrices\SelectTips{cm}{12}
\theoremstyle{plain}
\newtheorem{Thm}{\sc Theorem}[section]
\newtheorem{Cor}[Thm]{\sc Corollary}
\newtheorem*{Cor*}{\sc Corollary}
\newtheorem{Prop}[Thm]{\sc Proposition}
\newtheorem*{Prop*}{\sc Proposition}
\newtheorem{Lemma}[Thm]{\sc Lemma}
\theoremstyle{definition}
\theoremstyle{remark}
\newtheorem{Remark}[Thm]{Remark}
\newcommand{\cL}{{\cal L}}
\newcommand{\A}{{\cal A}}
\newcommand{\C}{{\cal C}}
\renewcommand{\H}{{\cal H}}
\newcommand{\M}{{\cal M}}
\renewcommand{\O}{{\cal O}}
\renewcommand{\P}{{\cal P}}
\newcommand{\QQ}{\mathbb{Q}}
\newcommand{\Gm}{\mathbb{G}_m}
\newcommand{\codim}{\mathop{\rm codim}}
\newcommand{\ext}{{\mathop{\rm Ext}}}
\newcommand{\Ext}{{\mathop{{\cal E}xt}}}
\newcommand{\GL}{\mathop{\rm GL}}
\newcommand{\SL}{\mathop{\rm SL}}
\newcommand{\mod}{\mathop{\rm mod\,}}
\newcommand{\Hom}{{\mathop{{\cal H}om}}}
\newcommand{\hot}{{\widehat{\otimes}}}
\newcommand{\PGL}{\mathop{\rm PGL}}
\newcommand{\Pic}{{\mathop{\rm Pic}}}
\newcommand{\PIC}{ {\mathop{{\cal P}ic}}}
\newcommand{\Quot}{\mathop{\rm Quot}}
\newcommand{\rk}{\mathop{\rm rk}}
\newcommand{\ch}{\mathop{\rm ch}}
\newcommand{\Td}{\mathop{\rm Td}}
\newcommand{\Sch}{\mathop{\rm Sch}}
\newcommand{\Sets}{\mathop{\rm Sets}}
\newcommand{\Spec}{\mathop{\rm Spec}}
\newcommand{\sm}{{\mathop{\rm sm}}}
\newcommand{\ti}{\tilde}
\newcommand{\Tors}{\mathop{\rm Torsion}}
\begin{document}

\pagestyle{myheadings}

\markboth{\rm A.\ Langer}{\rm  Moduli spaces of principal bundles on singular varieties}

\title{Moduli spaces  of principal bundles on singular varieties}
\author{Adrian Langer}
\date{05.01.2012}
%%%
\maketitle

{{\sc Address:}\\
Institute of Mathematics, Warsaw University,
ul.\ Banacha 2, 02-097 Warszawa, Poland}

\bigskip

\begin{center}
To the memory of Professor Masaki Maruyama.
\end{center}

\begin{abstract}
Let $k$ be an algebraically closed field of characteristic zero.
Let $f:X\to S$ be a flat, projective morphism of $k$-schemes of
finite type with integral geometric fibers. We prove existence of
a projective relative moduli space for semistable singular
principal bundles on the fibres of $f$.

This generalizes the result of A. Schmitt who studied the case
when $X$ is a nodal curve.
\end{abstract}

\let\thefootnote\relax\footnote{\emph{2000 Mathematics Subject Classification.} Primary: 14D20, 14D22; Secondary: 14H60, 
14J60.}

\section{Introduction}

Let $X$ be a smooth projective variety defined over an algebraically
closed field $k$ of characteristic $0$. In \cite{Ma1} and \cite{Ma2} M.
Maruyama, generalizing Gieseker's result from the surface case, constructed 
coarse moduli spaces of semistable sheaves on $X$ (in fact the construction worked  
in some other cases). Later these moduli spaces were also constructed 
for arbitrary varieties (see C. Simpson's paper \cite{Si}) and in an arbitrary characteristic 
(see \cite{La1} and \cite{La2}). Since the moduli space of semistable sheaves compactifies 
the moduli space of (semistable) vector bundles, it is an obvious problem to try to construct 
similar compactifications in case of principal bundles. This problem was considered by many authors
(see \cite{Sch5} and the references within) and it was solved in case of smooth varieties.
However, in case of singular varieties the problem is still open in spite of some partial 
results (see, e.g., \cite{Bh} and \cite{Sch4}). The aim of this paper is to solve this problem 
in the characteristic zero case.

Let $\rho\colon G \to {\GL}(V)$
be a faithful $k$-representation of the reductive group $G$.
In the following we assume that image of the representation $\rho$ is 
contained in $\SL (V)$.

A \emph{pseudo $G$-bundle} is a pair $(\A, \tau)$, where $\A$ is a
torsion free $\O_X$-module of rank $r=\dim V$ and $\tau\colon {\rm Sym}
^*(\A\otimes V)^G \to {\O}_X$ is a nontrivial homomorphism of
${\O}_X$-algebras. In \cite{Bh} U. Bhosle, following earlier work
of A. Schmitt \cite{Sch2} in the smooth case, constructed the
moduli space of pseudo $G$-bundles in case $X$ satisfies some
technical condition, which she showed to hold for seminormal or
$S_2$-varieties. However, it is easy to see that this condition is
always satisfied (see Lemma \ref{Bhosle's-condition}).

Giving the homomorphism $\tau$ is equivalent to giving a section
$$\sigma: X\to {\mathbb H}{\rm om} (\A, V^{\vee}\otimes {\O}_X)/\!/G= \Spec ({\rm Sym} ^* (\A\otimes
V)^G).$$ Let $U_{\A}$ denotes the maximum open subset of $X$ where
$\A$ is locally free. We say that  the pseudo-$G$-bundle $(\A,
\tau)$ is a \emph{singular principal $G$-bundle} if there exists a
non-empty open subset $U\subset U_{\A}$ such that $\sigma (U)
\subset {\mathbb I}{\rm som} (V\otimes {\O}_U,
\A^{\vee}\mid_U)/G$.

In case when $X$ is smooth,  A. Schmitt showed in
\cite{Sch3} that the moduli space of $\delta$-semistable pseudo $G$-bundles 
parametrizes only singular principal $G$-bundles
(for large values of the parameter polynomial $\delta$). 
In a subsequent paper \cite{Sch4}, he also showed that 
in case when $X$ is a curve with only nodes as singularities, 
the moduli space constructed by Bhosle parameterizes only
singular principal $G$-bundles. Moreover, under some mild assumptions
on the representation $\rho$, he proved that $\sigma (U_{\A})
\subset {\mathbb I} {\rm som} (V\otimes {\O}_U,
\A^{\vee}\mid_{U_{\A}})/G$ (in this case we say that $(\A, \tau)$
is an \emph{honest singular principal $G$-bundle}).

In this paper we prove that the same result holds for all the
varieties: the moduli space constructed by Bhosle (for large
values of the parameter polynomial $\delta$) parameterizes 
singular principal $G$-bundles for all
varieties $X$ and all representations $\rho$. More precisely, we
prove the following theorem:

\begin{Thm}
Let $f:X\to S$ be a flat, projective morphism of $k$-schemes of
finite type with integral geometric fibers. Assume that $k$ has
characteristic zero. Let us fix a polynomial $P$ and a faithful
representation $\rho\colon G \to \SL(V)\subset {\GL}(V)$ of the reductive
algebraic group $G$.

\begin{enumerate}
\item There exists a projective moduli space $M^{\rho}_{X/S, P}\to S$ for $S$-flat families
of semistable singular principal $G$-bundles on $X\to S$ such that
for all $s\in S$ the restriction $\A |_{X_s}$ has Hilbert
polynomial $P$.
\item Let $P$ correspond to sheaves of degree $0$.
If the fibres of $f$ are Gorenstein and there exists a
$G$-invariant non-degenerate quadratic form $\varphi$ on $V$ then
$M^{\rho}_{X/S, P}\to S$ parameterizes only honest singular principal
$G$-bundles.
\end{enumerate}
\end{Thm}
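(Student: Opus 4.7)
My plan is to build on Bhosle's construction of the moduli of pseudo $G$-bundles in \cite{Bh}, which by Lemma~\ref{Bhosle's-condition} works without any restriction on $X$, and then to show that for a sufficiently large parameter polynomial $\delta$ every $\delta$-semistable pseudo $G$-bundle is automatically a singular principal $G$-bundle. First I would set up the relative GIT problem: parametrize pairs $(\A,\tau)$ with $\A$ a torsion-free quotient of a fixed twisted trivial sheaf (after tensoring by a high power of an $f$-very ample line bundle) and $\tau$ a nontrivial algebra map, then perform Simpson-style GIT over $S$ to obtain a projective relative moduli space $M^{\rho,\delta}_{X/S,P}\to S$ of $\delta$-semistable pseudo $G$-bundles. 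Boundedness of the underlying torsion-free sheaves is provided by Simpson and Langer (\cite{Si}, \cite{La1}, \cite{La2}), and the relative GIT machinery of Seshadri/Simpson gives projectivity over $S$.

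The hard step is to prove that for $\delta\gg 0$ no $\delta$-semistable pair is a genuinely non-principal pseudo bundle: the section $\sigma:X_s\to{\mathbb H}{\rm om}(\A,V^{\vee}\otimes\O_{X_s})/\!/G$ must factor generically through the open subset ${\mathbb I}{\rm som}(V\otimes\O_U,\A^{\vee}|_U)/G$. Following the approach of Schmitt in the smooth and nodal cases (\cite{Sch3}, \cite{Sch4}), the idea is that a failure of this factorization at the generic point of $X_s$ produces, by applying the Hilbert--Mumford criterion to the $G$-action on the generic fibre of ${\mathbb H}{\rm om}(\A,V^{\vee}\otimes\O_X)$, a weighted filtration of $\A|_{X_s}$ which violates $\delta$-semistability once $\delta$ is large. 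The main obstacle is that Schmitt's local analysis in \cite{Sch3} uses smoothness of the ambient variety; to handle general singular integral fibres one restricts everything to the dense open $U_\A$ on which $\A$ is locally free and carries out the GIT analysis there. To pick a single threshold $\delta_0$ valid for the whole family one needs boundedness of the potentially destabilizing subsheaves, which again reduces to boundedness of semistable torsion-free sheaves with fixed Hilbert polynomial.

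For part (2), the Gorenstein hypothesis on the fibres together with the $G$-invariant non-degenerate form $\varphi$ on $V$ furnishes a $G$-equivariant isomorphism $V\cong V^{\vee}$ and hence, via Grothendieck--Serre duality on $X_s$, a natural self-pairing of $\A$ along each fibre. This reproduces in the relative singular setting the framework Schmitt exploits for nodal curves in \cite{Sch4}: the ``degree $0$'' condition should be interpreted as the vanishing of a Chern-class function on the moduli space that measures the defect of $\sigma$ from landing in the isomorphism locus over all of $U_\A$. Being a determinantal condition on $\sigma$, this defect vanishes on a closed subscheme $M^{\rho,h}_{X/S,P}\subset M^{\rho}_{X/S,P}$; combined with the duality above it forces $\sigma(U_\A)\subset{\mathbb I}{\rm som}/G$, so that $M^{\rho,h}_{X/S,P}$ parametrizes exactly the honest singular principal $G$-bundles. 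The subtle point here is to check that the ``degree $0$'' numerical condition really is equivalent to $\sigma$ being an isomorphism at every point of $U_\A$, which uses both the quadratic form (to compare $\A$ with $\A^{\vee}$) and the Gorenstein hypothesis (to make duality behave well on the singular fibres).
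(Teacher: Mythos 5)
Your skeleton matches the paper's: construct the moduli space of $\delta$-semistable pseudo $G$-bundles via Bhosle's GIT construction (valid for all varieties by Lemma~\ref{Bhosle's-condition}), show that for $\delta\gg 0$ every $\delta$-semistable pseudo $G$-bundle is a singular principal $G$-bundle, and handle honesty in part (2) via the quadratic form and Gorenstein duality. But there is a genuine gap at the step you describe as ``boundedness of the potentially destabilizing subsheaves, which again reduces to boundedness of semistable torsion-free sheaves with fixed Hilbert polynomial.'' It does not so reduce, and this is precisely where the paper's main new input sits. The semistable reduction argument of G\'omez--Langer--Schmitt--Sols requires a \emph{global} boundedness theorem: the family of torsion-free sheaves $\A$ with fixed Hilbert polynomial admitting a slope $\overline{\delta}$-semistable $\rho_{a,b}$-swamp $(\A,L,\varphi)$, for \emph{some} positive $\overline{\delta}$ and $L$ of fixed class, is bounded. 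The sheaves $\A$ here are not semistable, so one must bound $\mu_{\max}(\A)$ by exploiting the nonvanishing of $\varphi$ on $(\A^{\otimes a})^{\oplus b}$, and that forces an inequality of the form $\mu_{\max}(E_1\hot E_2)\le\mu_{\max}(E_1)+\mu_{\max}(E_2)+C$ on the (possibly non-normal) fibre. On smooth or normal varieties this is known; on non-normal varieties it is new, and the paper proves it (Proposition~\ref{mu_max-of-tensor-product}) by passing to the normalization $\nu:\ti X\to X$, comparing $E$ with $\nu^{!}E$ and $\nu^*E/\Tors$, and controlling the slope defects through the conductor ideal. Without this your choice of a uniform threshold $\delta_\infty$, and hence the identification of the pseudo-bundle moduli space with a moduli space of singular principal bundles, does not go through. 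Relatedly, your plan to ``redo Schmitt's GIT analysis on $U_\A$'' is not what is needed: once global boundedness of swamps is in place, the semistable reduction theorem is proved verbatim as in the smooth case, with no new local analysis.

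For part (2) your instinct (use $\varphi$ to produce $\A\to\A^{\vee}$ and invoke duality on Gorenstein fibres) is correct, but the mechanism is not a ``Chern-class function measuring the defect of $\sigma$.'' The paper's argument is numerical: one proves $\deg E^{\vee}=-\deg E$ on a Gorenstein variety (via local duality, the local-to-global $\Ext$ spectral sequence, and the support estimates of Lemma~\ref{Ext-support}), so for $\deg\A=0$ the injection $\A\to\A^{\vee}$ has vanishing degree of cokernel in codimension one, hence is an isomorphism in codimension one, hence an isomorphism at every point where $\A$ is locally free by $S_2$-rigidity. That is what forces $\sigma(U_\A)$ into the isomorphism locus. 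You should make this duality computation explicit rather than appeal to a determinantal vanishing locus, since the degree of $\A$ is already fixed by the Hilbert polynomial and is not an extra equation cutting out a subscheme.
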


Since the fibre of $M^{\rho}_{X/S, P}\to S$ over $s\in S$ is equal
to $M^{\rho}_{X_s, P}$ this theorem shows that moduli spaces of
singular principal bundles are compatible with degeneration.

\medskip

Our approach is similar to the one used in \cite{GLSS1},
\cite{GLSS2} as explained in \cite{Sch5}: we prove a global
boundedness result for swamps (this part of our paper 
works in any characteristic).
Then we use this fact to prove the semistable reduction theorem in the
same way as in the case of smooth varieties. The above mentioned
boundedness result is the main novelty of the paper. It is obtained
by proving that the tensor product of semistable sheaves on a variety 
is not far from being semistable.

The second part of the theorem follows from careful computation of
Hilbert polynomials of dual sheaves on Gorenstein varieties.

\medskip

Unfortunately, the above approach does not work in positive
characteristic because we still do not know how to construct
moduli spaces of swamps for representations of type
$\rho_{a,b,c}\colon {\rm GL} (V) \to {\rm GL}((V^{\otimes a})^{\oplus b}\otimes (\det V)^{-c})$ for $c\ne 0$. In case of
characteristic zero, to construct the moduli space of pseudo
$G$-bundles it was sufficient to use moduli spaces of $\rho
_{a,b,c}$-swamps for $c=0$. But the construction used the Reynolds
operator which is not available in positive characteristic.

Moreover, in positive characteristic there appears a serious
problem with defining the pull-back operation for families of
pseudo $G$-bundles on non-normal varieties (see \cite[Remark
2.9.2.23]{Sch5}).
\medskip

The structure of paper is as follows. In Section 2 we recall some
definitions and results, and we show that Bhosle's condition is
satisfied for all varieties. In Section 3 we study Picard schemes
in the relative setting and we state some existence results for
moduli spaces of swamps. Section 4 is a technical heart of the
paper: we prove that the tensor product of semistable sheaves on
non-normal varieties is close to being semistable. Then in Section
5 we show that  in many cases singular principal bundles of degree
$0$ are honest. In Section 6 we use all these results to prove
semistable reduction theorem and to show existence of projective
relative moduli spaces for (honest) singular principal bundles.

\medskip
\emph{Notation.}

All the schemes in the paper are locally noetherian. A
\emph{variety} is an irreducible and reduced separated scheme of
finite type over an algebraically closed field.

\section{Preliminaries}

\subsection{Basic definitions}

Let $X$ be a $d$-dimensional projective variety over an
algebraically closed field $k$. Let $\O _X(1)$ be an ample line
bundle on $X$.

We say that a coherent sheaf $E$ on $X$ is \emph{torsion free} if
it is pure of dimension $d$. For a torsion free sheaf $E$ we can
write its Hilbert polynomial as
$$P(E)(m):=\chi(X, E\otimes \O_X (m))=\sum_{i=0}^d{\alpha _i(E)}{m^i\over i!}.$$
The \emph{rank} of $E$ is defined as the dimension of $E \otimes
K(X)$, where $K(X)$ is the field of rational functions. It
is denoted by $\rk E$ and it is equal to ${\alpha _d(E)/
\alpha _d(\O_X)}.$ We also define the \emph{degree} of $E$ as
$$\deg E=\alpha_{d-1}(E)-\rk E \cdot \alpha_{d-1}(\O _X)$$
(see \cite[Definition 1.2.11]{HL}). 
The \emph{slope} $\mu (E)$ is, as usually, defined as the
quotient of the degree of $E$  by the rank of $E$.

For two coherent sheaves $E, F$ on $X$ we set $$E\hot F=E\otimes
F/\Tors. $$

\begin{Lemma}\label{normal-tensor-product}
If $X$ is a normal  variety and $E$ and $F$ are torsion free sheaves on $X$ then
$$\mu (E\hot F)=\mu(E)+\mu (F).$$
\end{Lemma}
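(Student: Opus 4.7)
The plan is to reduce the statement to codimension-one information. Normality of $X$ implies that $X$ is regular in codimension one, and every torsion-free module over a discrete valuation ring is free, so both $E$ and $F$ are locally free over a common open $U\subset X$ with $\codim_X(X\setminus U)\geq 2$. I may further shrink $U$ so that $U\subseteq X_{\sm}$. On $U$ the ordinary tensor product $E|_U\otimes F|_U$ is locally free, hence already torsion-free, so it coincides with $(E\hot F)|_U$.

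I would then observe that the torsion subsheaf $T=\ker(E\otimes F\twoheadrightarrow E\hot F)$ is supported on $X\setminus U$, which has codimension $\geq 2$, so its Hilbert polynomial has degree at most $d-2$. Consequently the top two coefficients $\alpha_d$ and $\alpha_{d-1}$ are unchanged in passing from $E\otimes F$ to $E\hot F$, giving $\rk(E\hot F)=\rk E\cdot \rk F$ and $\deg(E\hot F)=\deg(E\otimes F)$. It therefore suffices to establish the degree identity $\deg(E\otimes F)=\rk F\cdot \deg E+\rk E\cdot \deg F$.

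For this I would work with determinants. For a torsion-free sheaf $G$ set $\det G:=(\wedge^{\rk G}G)^{**}$; on the normal variety $X$ this is a reflexive sheaf of rank one. On $U$ the standard formula $\det(E\otimes F)\cong (\det E)^{\otimes \rk F}\otimes (\det F)^{\otimes \rk E}$ holds for locally free sheaves; since both sides are rank-one reflexive sheaves on $X$, and reflexive sheaves on a normal variety are determined by their restriction to any open subset whose complement has codimension $\geq 2$, the isomorphism extends to all of $X$. Taking degrees, and invoking $\deg G=\deg\det G$ for every torsion-free $G$, yields the desired additivity. Dividing by $\rk E\cdot \rk F$ then produces the slope formula.

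The main obstacle I expect is the auxiliary identity $\deg G=\deg\det G$ on a normal but possibly singular $X$, since the definition of $\deg$ goes through the full Hilbert polynomial rather than purely codimension-one data. The cleanest way to handle it is to cut $X$ by a sufficiently general complete intersection $C=H_1\cap\dots\cap H_{d-1}$ of large multiples of $\O_X(1)$: such a $C$ is smooth, lies inside $U$, and the restriction $G|_C$ is a locally free sheaf on a smooth projective curve, so the identity reduces via asymptotic Riemann--Roch to its classical form in the curve case.
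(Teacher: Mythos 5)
Your argument is correct, and its engine is the same as the paper's: normality lets you choose a general complete intersection curve (and zero-dimensional cut) inside the open locus where $E$, $F$, and hence $E\hot F$ are locally free, and since rank and degree are determined by the Euler characteristics of these restrictions (\cite[Lemma 1.2.1]{HL}), everything reduces to the classical additivity of slopes for locally free sheaves on points and on a smooth curve. The one organizational difference is that you route the degree computation through determinants, extending $\det(E\hot F)\simeq(\det E)^{\otimes \rk F}\otimes(\det F)^{\otimes \rk E}$ from the big open set by reflexivity and then proving the auxiliary identity $\deg G=\deg\det G$ by the very same curve-cutting device; the paper skips this detour and instead records $\deg G=\deg\det G$ as a byproduct of the lemma's proof (see the remark following it). Your preliminary reduction --- the torsion of $E\otimes F$ is supported in codimension $\ge 2$, so passing to $E\hot F$ changes neither $\alpha_d$ nor $\alpha_{d-1}$ --- is sound and is implicit in the paper's version as well.
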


\begin{proof}
If $E$ is a torsion free sheaf then for a general choice of hyperplanes $H_1,..., H_d\in |\O_X(1)|$
we have $$P(E)(m)= \sum _{i=0}^d\chi (E|_{\bigcap _{j\le i} H_j}){m+i-1 \choose i} $$
(see \cite[Lemma 1.2.1]{HL}). It follows that the rank and degree of $E$ depend only on 
$\chi (E|_{\bigcap _{j\le i} H_j})$ for $i=d$ and $i=d-1$.

If $X$ is a normal variety then by assumption $E$ is locally free outside 
of a closed subset of codimension $\ge 2$. For a general choice of hyperplanes
$H_1,..., H_d\in |\O_X(1)|$ the intersection ${\bigcap _{j\le d} H_j}$ is a union of points and 
${\bigcap _{j\le d-1} H_j}$ is a smooth curve.
Therefore the sheaves $E|_{\bigcap _{j\le i} H_j}$ for $i=d$ and $i=d-1$ are locally free.
Similarly, the sheaves $F|_{\bigcap _{j\le i} H_j}$ for $i=d$ and $i=d-1$ are locally free.
Since in case of points and smooth curves our assertion is clear, we get the lemma.
\end{proof}

\medskip

If $X$ is normal then we can define the determinant of a torsion free sheaf 
$E$ as the reflexivization of $\bigwedge ^{\rk E}E$. In this case
the degree $\deg E$ is equal to the degree of the determinant. This fact follows immediately
from the proof of the above lemma.

\subsection{Serre's conditions $S_k$}

We say that a coherent sheaf $E$ on a scheme $X$ satisfies
\emph{condition $S_k$} if for all points $x\in X$ we have ${\rm
depth}_x(E_x)\ge \min (\dim E_x, k)$.

\medskip

The following lemma is quite standard but we need a more general
version than usual. In case of smooth projective
varieties it is essentially equivalent to \cite[Proposition
1.1.6]{HL}.

\begin{Lemma} \label{Ext-support}
Let $X$ be a Cohen--Macaulay scheme of finite type over a field.
Then \begin{enumerate}
\item
$ \Ext _{X}^q(E,\omega_X)$ is supported on the support of $E$ and
for all points $x\in X$ we have $ \Ext _{X}^q(E,\omega_X)_x=0$ if
$q<\codim _x E$. Moreover, $\codim _x\Ext _{X}^q(E,\omega_X)\ge q$
for $q\ge \codim _x E$.
\item
$E$ satisfies  condition $S_k$ if and only if for all points
$x\in X$ we have $\codim _x\Ext _{X}^q(E,\omega_X)\ge q+k$ for all
$q> \codim _x E$.
\end{enumerate}
\end{Lemma}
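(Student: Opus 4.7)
The plan is to extract everything from local duality on the Cohen--Macaulay scheme $X$, which carries a dualizing sheaf $\omega_X$. At an arbitrary point $y\in X$, writing $R=\O_{X,y}$, $d(y)=\dim R$ and $\mathfrak{m}=\mathfrak{m}_y$, local duality provides Matlis-dual isomorphisms
\[
H^i_{\mathfrak{m}}(E_y)\cong \Ext^{d(y)-i}_R(E_y,\omega_{X,y})^{\vee};
\]
combining these with the classical range $\mathrm{depth}(E_y)\le i\le \dim E_y$ of nonvanishing of local cohomology yields
\[
\min\{q:\Ext^q_X(E,\omega_X)_y\ne 0\}=\codim_y E,\quad \max\{q:\Ext^q_X(E,\omega_X)_y\ne 0\}=d(y)-\mathrm{depth}(E_y).
\]
All assertions of the lemma will be read off from these two identities as $y$ varies.

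For item (1), the containment of supports and the vanishing $\Ext^q_X(E,\omega_X)_x=0$ for $q<\codim_x E$ are immediate from the first identity applied at $y=x$. For the codimension bound I would pick a minimal prime $\mathfrak{p}$ of $\mathrm{Supp}(\Ext^q_X(E,\omega_X))$ specializing to $x$; then $\Ext^q_X(E,\omega_X)_{\mathfrak{p}}$ has finite length, and local duality at $\mathfrak{p}$ gives $H^{d(\mathfrak{p})-q}_{\mathfrak{p}}(E_{\mathfrak{p}})\ne 0$, so $\mathrm{depth}(E_{\mathfrak{p}})\le d(\mathfrak{p})-q$ and in particular $d(\mathfrak{p})\ge q$. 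Because $X$ is catenary, $\codim_x\Ext^q_X(E,\omega_X)$ is the minimum of $\dim\O_{X,\mathfrak{p}}=d(\mathfrak{p})$ over such $\mathfrak{p}$, and the desired inequality drops out.

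For item (2), the converse direction is the clean one: if $\mathrm{depth}(E_y)<\min(\dim E_y,k)$ at some $y$, the max-identity produces some $q_0>d(y)-k$ with $\Ext^{q_0}_X(E,\omega_X)_y\ne 0$, while $\mathrm{depth}(E_y)<\dim E_y$ simultaneously gives $q_0>\codim_y E$; since $y$ lies in $\mathrm{Supp}(\Ext^{q_0}_X(E,\omega_X))$ one has $\codim_y\Ext^{q_0}_X(E,\omega_X)\le d(y)<q_0+k$, contradicting the hypothesis. The forward direction reuses the setup of (1) at a minimal prime $\mathfrak{p}$ of $\mathrm{Supp}(\Ext^q_X(E,\omega_X))$ above $x$ with $q>\codim_x E$, combining $\mathrm{depth}(E_{\mathfrak{p}})\le d(\mathfrak{p})-q$ with the $S_k$-bound $\mathrm{depth}(E_{\mathfrak{p}})\ge \min(\dim E_{\mathfrak{p}},k)$ to read off the desired $d(\mathfrak{p})\ge q+k$.

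The main obstacle I anticipate is the delicate case $\dim E_{\mathfrak{p}}<k$ in the forward direction of (2): here $E_{\mathfrak{p}}$ must already be Cohen--Macaulay, and the finite-length condition on $\Ext^q_X(E,\omega_X)_{\mathfrak{p}}$ at the minimal prime $\mathfrak{p}$ then forces the top-Ext relation $q=\codim_{\mathfrak{p}} E$; one must exploit the strict inequality $q>\codim_x E$ together with the catenary behaviour of the components of $\mathrm{Supp}(E)$ passing through $x$ to exclude this regime or otherwise extract the bound $d(\mathfrak{p})\ge q+k$. The remainder of the proof is careful bookkeeping with local duality across the points of $X$.
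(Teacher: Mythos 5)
Your argument is, in substance, the paper's own argument: local duality over each local ring $\O_{X,y}$ combined with Grothendieck's vanishing and non-vanishing theorems for local cohomology, giving that $\Ext_X^q(E,\omega_X)_y\ne 0$ exactly in the range $\codim_y E\le q\le \dim\O_{X,y}-\mathrm{depth}(E_y)$, with non-vanishing at both endpoints. Your proof of part 1 and of the ``if'' direction of part 2 is complete and correct; in fact for the ``Moreover'' clause of part 1 you argue directly at the minimal primes of $\mathrm{Supp}\,\Ext^q_X(E,\omega_X)$, which is cleaner than the paper's reduction of that clause to part 2 with $k=0$.

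The ``delicate case'' you flag in the ``only if'' direction of part 2 is a genuine obstruction, not a step you failed to see: it cannot be excluded in the stated generality, because part 2 as literally written fails for sheaves with non-equidimensional support. Take $X=\mathbb{A}^3$ and $E=\O_Y\oplus\O_Z$ with $Y$ a plane and $Z$ a line through a point $x\in Y\cap Z$. Then $E$ satisfies $S_1$ and $\codim_x E=1$, but $\Ext^2_X(E,\omega_X)\cong\Ext^2_X(\O_Z,\omega_X)$ is supported on all of $Z$, so $\codim_x\Ext^2_X(E,\omega_X)=2<q+k=3$. This realizes exactly your regime $q=\codim_{\mathfrak{p}}E>\codim_x E$ with $\dim E_{\mathfrak{p}}<k$. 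What the paper's proof actually establishes is the pointwise criterion: $E$ is $S_k$ if and only if for every point $y$ and every $q>\codim_y E$, non-vanishing of $\Ext^q_X(E,\omega_X)_y$ implies $\dim\O_{X,y}\ge q+k$; the last sentence of that proof then silently identifies this with the $\codim_x$ formulation in the statement. That identification is exactly the passage to minimal primes that you attempt, and it is valid precisely when every minimal prime $\mathfrak{p}$ of $\mathrm{Supp}\,\Ext^q_X(E,\omega_X)$ over $x$ satisfies $\codim_{\mathfrak{p}}E=\codim_x E$ --- for instance when $E$ is pure, which is the only case the paper ever uses ($E$ torsion free, so $\codim_x E=0$ everywhere). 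Under purity your bad case disappears: $q>\codim_{\mathfrak{p}}E$ gives $\mathrm{depth}(E_{\mathfrak{p}})\le d(\mathfrak{p})-q<\dim E_{\mathfrak{p}}$, so $S_k$ forces $\mathrm{depth}(E_{\mathfrak{p}})\ge k$ and hence $d(\mathfrak{p})\ge q+k$. So the correct conclusion is not that you are missing an idea, but that part 2 should either be restated pointwise or be restricted to pure sheaves; with either amendment your proof closes.
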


\begin{proof}
By assumption $X$ is Cohen--Macaulay and every local ring
$\O_{X,x}$ is a quotient of a regular local ring, so we can apply
the local duality theorem (see \cite[Theorem 6.7]{Ha2}) to prove
that $ \Ext _{X}^q(E,\omega_X)_x\ne 0$ if and only if $\H ^{\dim
_x X-q}_x(E)\ne 0$. But the local cohomology $\H ^{\dim _x
X-q}_x(E)$ vanishes if $\dim _x X-q> \dim _xE$, which proves the
first part of 1. If $q= \codim _x E$ then $\codim _x(\Ext
_{X}^q(E,\omega_X))\ge q$ is equivalent to the obvious inequality
$\dim _x(\Ext _{X}^q(E,\omega_X))\le \dim _xE$. Hence, since every
sheaf satisfies $S_0$, the second part of 1 follows from 2.

\medskip
To prove 2 note that by  \cite[Theorem 3.8]{Ha2}  ${\rm
depth}_x(E_x)\ge \min (\dim E_x, k)$ if and only if $\H ^i_x(E)=0$
for all $i<\min (\dim E_x, k)$. By the local duality theorem this
last condition is equivalent to $ \Ext _{X}^q(E,\omega_X)_x=0$ for
$q>\max (\codim _x E, \dim \O_{X,x}- k)$. This is equivalent to
saying that for $q>\codim _xE$ a non-vanishing of $ \Ext
_{X}^q(E,\omega_X)_x$ implies $\dim \O_{X,x}\ge q+k$.
\end{proof}

\medskip

Let  $k$ be an algebraically closed field. Let $X$ be a
$d$-dimensional pure (i.e., $\O_X$ satisfies $S_1$) 
scheme of finite type over $k$. Let $C$ be a
smooth curve defined over $k$ and let us fix a closed point $0\in C$. 
By $p_X:Z=X\times C\to X$ we denote the projection.
Let $Y$ be a non-empty proper closed
subscheme of $X\times \{0\}$ (in particular, we assume that $X$
has dimension $\ge 1$), and let $i:Y\hookrightarrow Z$ denote the
corresponding closed embedding. Let us also set $U=Z-Y$ and let
$j: U\hookrightarrow Z$ denote the corresponding open embedding.

\begin{Lemma} \label{Bhosle's-condition}
If $E$ is a pure sheaf of
dimension $d$ on $X$ then we have a canonical isomorphism
$ p_X^*E \simeq j_*j^*(p_X^*E)$. In particular, $\O_Z\simeq j_*\O_U$
and for any locally free sheaf $F$ on $Z$ we have $F \simeq j_*j^*F$.
\end{Lemma}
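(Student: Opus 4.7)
The plan is to interpret the desired isomorphism $p_X^*E\simeq j_*j^*(p_X^*E)$ as a depth-$2$ statement for $p_X^*E$ along the closed subscheme $Y$, and then to assemble a regular sequence of length $2$ out of two independent sources: a local parameter on the curve $C$, and a function cutting out $Y$ inside $X\times\{0\}$ that purity makes $E$-regular.

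I would start from the standard local cohomology exact sequence
\[
0\to \H^0_Y(p_X^*E)\to p_X^*E\to j_*j^*(p_X^*E)\to \H^1_Y(p_X^*E)\to 0,
\]
so that the claim reduces to $\H^i_Y(p_X^*E)=0$ for $i=0,1$. By the local duality computation of Lemma~\ref{Ext-support}, applied to the ambient Cohen--Macaulay scheme $Z$ at each $z\in Y$, this is equivalent to the inequality $\mathrm{depth}_{I_{Y,z}}((p_X^*E)_z)\ge 2$. Writing $z=(x,0)$, the ideal $I_{Y,z}$ contains a local parameter $t\in\O_{C,0}$ that cuts out $X\times\{0\}$ in a neighbourhood of $z$. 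Since $p_X^*E=E\boxtimes\O_C$ is $p_C$-flat, multiplication by $t$ is injective on the stalk $(p_X^*E)_z$ and the quotient is canonically $E_x$. This gives one regular element, and the problem is reduced to finding a second $s\in I_{Y,x}$ (viewed in $\O_{X,x}$ via $X\times\{0\}\cong X$) that is a non-zero-divisor on $E_x$.

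For the second element I would invoke purity of $E$: since $E$ is pure of dimension $d$, its associated primes are minimal primes of $\O_X$ corresponding to $d$-dimensional irreducible components, and the zero-divisors on $E_x$ are exactly the union of these primes after localizing. On the other hand $Y$ is a proper closed subscheme of $X\times\{0\}$, so $I_{Y,x}$ is a nonzero ideal in $\O_{X,x}$ that (in the integral case, or more generally when $Y$ contains no component of the support of $E$) cannot be contained in any of these associated primes. Prime avoidance then delivers the required $s$, whence $\mathrm{depth}_{I_{Y,z}}((p_X^*E)_z)\ge 2$ and the first assertion follows.

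The two specializations are formal consequences. Taking $E=\O_X$, which is pure of dimension $d$ by the $S_1$ hypothesis on $X$, yields $\O_Z\simeq j_*\O_U$; for a locally free sheaf $F$ on $Z$ the projection formula then gives $j_*j^*F=F\otimes_{\O_Z}j_*\O_U=F\otimes_{\O_Z}\O_Z=F$. The step I expect to be most delicate is the production of the second regular element: one must ensure that in the generality stated, purity of $E$ together with $Y\subsetneq X\times\{0\}$ really does force $I_{Y,x}$ to meet the $E_x$-regular locus, which is transparent when $X$ is integral but requires the (implicit) additional assumption in the reducible case that $Y$ does not swallow a whole component on which $E$ is supported.
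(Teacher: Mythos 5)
Your argument is essentially the paper's: the same local--cohomology exact sequence reduces the claim to depth $\ge 2$ along $Y$, with the local parameter on $C$ as the first regular element and purity of $E$ supplying the second (your prime-avoidance step just unpacks the $S_1$ condition used in the paper), and the caveat you flag --- that $Y$ must not contain a whole component of $\mathrm{Supp}\,E$ --- is equally implicit in the paper's proof. One small correction: the equivalence of $\H^i_Y(p_X^*E)=0$ for $i<2$ with the depth inequality should be drawn from \cite[Theorem 3.8]{Ha2} or the Koszul-complex characterization of local cohomology rather than from Lemma \ref{Ext-support}, since that lemma's local-duality argument requires the ambient scheme to be Cohen--Macaulay, which $Z=X\times C$ need not be when $X$ is merely assumed $S_1$.
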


\begin{proof}
Let us set $F=p_X^*E$. Since we have a canonical map $F\to
j_*j^*F$, the assertion is local and hence we can assume that $X$
and $Y$ are affine. By \cite[Proposition 2.2]{Ha2} we have an
exact sequence
$$0\to i_*\H ^0_Y (F) \to F \to j_*j^*F \to i_*\H ^1_Y (F)\to 0.$$
To prove that $i_*\H ^i_Y (F)=0$ for $i=0,1$, it is sufficient to
prove that for every point $y\in Y$, the depth of  $F_{y}$ is at
least $2$ (see \cite[Theorem 3.8]{Ha2}). Now, let us take a local
parameter $s\in \O_{C, 0}$. Then $F_{ y}/s F _{y}\simeq E_{y}$ has
depth at least $1$ (because by assumption $E$ satisfies $S_1$), so
the required assertion is clear.
\end{proof}

\begin{Remark}
The above lemma shows in particular that every variety satisfies
condition (2.19) in the sense of Bhosle (see \cite[Definition
2.8]{Bh}).
\end{Remark}

\subsection{Moduli spaces of pseudo $G$-bundles}

Let us fix a faithful representation $\rho\colon G \to \SL (V)\subset {\rm
GL}(V),\ r= \dim V,$ of a reductive algebraic group $G$.

A \emph{pseudo $G$-bundle} is a pair $(\A, \tau)$, where $\A$ is a
torsion free $\O_X$-module of rank $r$ and $\tau\colon {\rm Sym}
^*(\A\otimes V)^G \to {\O}_X$ is a nontrivial homomorphism of
${\O}_X$-algebras. Giving $\tau$ is equivalent to giving a section
$$\sigma: X\to {\mathbb H}{\rm om} (\A, V^{\vee}\otimes {\O}_X)/\!/G= \Spec ({\rm Sym} ^* (\A\otimes
V)^G).$$

A \emph{weighted filtration} $(\A_{\bullet}, \alpha _{\bullet})$
of $\A$ is a pair consisting of a filtration
$$\A_{\bullet}= (0\subset \A_1\subset \dots \subset \A_s\subset \A)$$
by saturated subsheaves (i.e., such that the quotients $\A/\A_i$
are torsion free) of increasing ranks and an $s$-tuple
$$\alpha _{\bullet} =(\alpha _1,\dots , \alpha _s)$$
of positive rational numbers. To every weighted filtration
$(\A_{\bullet}, \alpha _{\bullet})$ one can associate the
polynomial
$$M(\A_{\bullet}, \alpha _{\bullet}):=\sum _{i=1}^s\alpha _i (P({\A})\cdot \rk (\A_i)- P({\A_i})\cdot \rk (\A)).$$

If $(\A_{\bullet}, \alpha _{\bullet})$ is a weighted filtration of
a pseudo $G$-bundle $(\A, \tau)$ then one can also define the
number $\mu (\A_{\bullet}, \alpha _{\bullet}, \tau)$ describing
stability of the ${\rm SL} (\A\otimes K(X))$-group action on ${\rm
Hom} (\A \otimes K(X), V^{\vee}\otimes K(X))/\!/G$ (see, e.g.,
\cite[3.3.2]{Sch4/5}).

Let us fix a positive polynomial $\delta$ with rational
coefficients and of degree $\le \dim X-1$. Then we say that a
pseudo $G$-bundle $(\A, \tau)$ is \emph{$\delta$-(semi)stable} if
$\A$ is torsion free and for any weighted filtration
$(\A_{\bullet}, \alpha _{\bullet})$ of $\A$ we have inequality
$$M(\A_{\bullet}, \alpha _{\bullet})+\delta \cdot \mu (\A_{\bullet}, \alpha _{\bullet}, \tau) (\ge) 0.$$

To define the slope version of (semi)stability instead of
$M(\A_{\bullet}, \alpha _{\bullet})$ one uses the rational number
$$L(\A_{\bullet}, \alpha _{\bullet}):=\sum _{i=1}^s\alpha _i (\deg {\A}\cdot \rk (\A_i)- \deg {\A_i}\cdot \rk (\A)).$$

The following theorem follows from the results of Schmitt
\cite{Sch2} (in the smooth case) and from the results of Bhosle
\cite{Bh} and Lemma \ref{Bhosle's-condition} in general:

\begin{Thm}\label{Bhosle-Schmitt}
Let $(X, \O _X(1))$ be a polarized projective variety defined over
an algebraically closed field of characteristic zero. Then there
exists a projective moduli space $M ^{\rho, \delta}_{X, P}$ for
$\delta$-semistable pseudo $G$-bundles $(\A, \tau)$ on $X$, such
that $\A$ has Hilbert polynomial $P$ (with respect to $\O_X(1)$).
\end{Thm}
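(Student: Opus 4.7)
The plan is to recognise this theorem as the combination of three independently established ingredients. First, in characteristic zero the reductivity of $G$ provides a Reynolds projector, so $\mathrm{Sym}^*(\A\otimes V)^G$ is a direct summand of $\mathrm{Sym}^*(\A\otimes V)$, and moreover the invariant algebra is finitely generated by invariants of degree bounded in terms of $\rho$ and $r=\dim V$ only. Fixing such homogeneous generators $f_1,\ldots,f_N$ of degrees $d_1,\ldots,d_N$, the datum of an $\O_X$-algebra homomorphism $\tau\colon \mathrm{Sym}^*(\A\otimes V)^G\to \O_X$ becomes equivalent to a finite tuple of $G$-equivariant morphisms $\varphi_i\colon \mathrm{Sym}^{d_i}(\A\otimes V)\to \O_X$ subject to the relations among the $f_i$. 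This identifies pseudo $G$-bundles with a special kind of decorated sheaf (swamp) for representations of the type $\rho_{a,b,0}$.

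Second, for swamps of this form Schmitt's GIT construction produces the moduli space in the smooth case: one picks $m\gg 0$, forms the Quot scheme parametrising quotients $\O_X(-m)^{P(m)}\twoheadrightarrow \A$, builds a projective parameter space $\mathcal{R}$ above it encoding the swamp data, chooses a linearisation for the natural $\mathrm{SL}_{P(m)}$-action so that the Hilbert--Mumford numerical criterion translates into $\delta$-semistability against weighted filtrations, and takes the GIT quotient. Bhosle extended this machinery to arbitrary singular projective varieties under her condition (2.19), which enters at the semistable reduction step: one has to extend a family of swamps from the generic fibre of a degeneration $X\times C$ across a divisor, and this uses the identity $p_X^*E\simeq j_*j^*(p_X^*E)$ for pure sheaves $E$ on $X$.

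Third, Lemma \ref{Bhosle's-condition} is exactly this property and is now proved for every projective variety over $k$, so Bhosle's construction produces a projective moduli space of $\delta$-semistable swamps of the required Hilbert polynomial. Inside this space, the locus cut out by the closed conditions that the $\varphi_i$ be $G$-invariant and satisfy the algebra relations of $\mathrm{Sym}^*(\A\otimes V)^G$ is precisely $M^{\rho,\delta}_{X,P}$, and it inherits projectivity. The main obstacle, conceptually mild but requiring care, is the bookkeeping step of checking that (2.19) is the \emph{only} place in Bhosle's construction where properties of $X$ beyond being an integral projective variety are used; every other input — boundedness of the family, construction of the Quot scheme, linearisation, Hilbert--Mumford analysis of weighted filtrations, and passage to the GIT quotient — is insensitive to singularities of $X$. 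Once this verification is made, the theorem follows by direct assembly.
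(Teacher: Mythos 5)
Your proposal is correct and follows essentially the same route as the paper: the theorem is deduced from Schmitt's and Bhosle's GIT construction of moduli of pseudo $G$-bundles (via the Reynolds operator and swamps of type $\rho_{a,b,0}$ in characteristic zero), with the only new input being Lemma \ref{Bhosle's-condition}, which shows that Bhosle's condition (2.19) holds for every projective variety and thus removes her seminormality/$S_2$ hypothesis. The paper's own proof is exactly this one-line assembly of citations, so no further comparison is needed.
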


\subsection{Semistability of singular principal $G$-bundles}

Let $(\A ,\tau)$ be a pseudo $G$-bundle. Let us recall that giving
$\tau$ is equivalent to giving a section
$$\sigma: X\to {\mathbb H}{\rm om} (\A, V^{\vee}\otimes {\O}_X)/\!/G= \Spec ({\rm Sym} ^* (\A\otimes
V)^G).$$ Let $U_{\A}$ denotes the maximum open subset of $X$ where
$\A$ is locally free. The pseudo-$G$-bundle $(\A, \tau)$ is a
\emph{singular principal $G$-bundle} if there exists a non-empty
open subset $U\subset U_{\A}$ such that
$$\sigma (U) \subset {\mathbb I}{\rm som} (V\otimes {\O}_U,
\A^{\vee}\mid_U)/G .$$ If $\A$ has degree $0$ and $\sigma (U_{\A})
\subset {\mathbb I}{\rm som} (V\otimes {\O}_{U_{\A}},
\A^{\vee}\mid_{U_{\A}})/G $ then we say that $(\A, \tau)$ is an
\emph{honest singular principal $G$-bundle}.

Let us recall  that a singular principal $G$-bundle $(\A, \tau)$,
via the following pull-back diagram, defines a principal
$G$-bundle $\P (\A , \tau)$ over the open subset $U$:
$$
\xymatrix{ {\cal P}({\cal A},\tau) \ar[r] \ar[d] & {\mathbb I}{\rm
som} (V\otimes{\O}_U,{\A}^\vee \mid_{U}) \ar[d]
\\
U \ar[r]^-{\sigma_{|U}} & {\mathbb I}{\rm
som}(V\otimes{\O}_U,{\A}^\vee \mid_U)/G. }
$$

If $X$ is smooth then every singular principal $G$-bundle is
honest (see \cite[Lemma 3.4.2]{Sch4/5}). Note that our definitions
are slightly different to those appearing in previous literature
(which changed in time to the one close to our definitions).

\medskip

Let $(\A ,\tau)$ be a singular principal $G$-bundle and let
$\lambda: \Gm \to G$ be a one-parameter subgroup of $G$. Let
$$Q_{G}(\lambda):=\{ g\in G: \lim _{t\to \infty}\lambda (t)g\lambda(t)^{-1}\hbox{ exists in }G\}.$$
A \emph{reduction} of $(\A ,\tau)$ to $\lambda$ is a section
$\beta: U' \to \P (\A, \tau)/Q_{G}(\lambda)$ defined over some
non-empty open subset $U'\subset U$. Such reduction defines a
reduction of structure group of a principal $\GL (V)$-bundle
associated to $\A \mid_{U'}$ to the parabolic subgroup $Q _{\GL
(V)}(\lambda)$, so we get a weighted filtration $(\A_{\bullet}',
\alpha _{\bullet})$ of $\A\mid_{U'}$.

Let $j:U'\hookrightarrow X$ denote the open embedding. Then for
$i=1,...,s$ we define $\A_i$ as saturation of $\A\cap j_*(\A_i')$.
In particular, we get a weighted filtration $(\A_{\bullet}, \alpha
_{\bullet})$ of $\A$.

We say that a singular principal $G$-bundle $(\A, \tau)$ is
\emph{(semi)stable} if $\A$ is torsion free and for any reduction
of $(\A ,\tau)$ to a one-parameter subgroup $\lambda:\Gm \to G$ we
have inequality
$$M(\A_{\bullet}, \alpha _{\bullet}) (\ge) 0.$$

\section{Moduli spaces of swamps revisited}

In this section we recall and reprove some basic results
concerning existence of the relative Picard scheme and its
compactifications. Then we apply these results to existence of
moduli spaces of swamps.

We interpret the compactified Picard scheme as the coarse moduli
space of stable rank $1$ sheaves and we use Simpson's construction
of these moduli spaces to prove existence of the universal family
(i.e., the Poincare sheaf) under appropriate assumptions. This
approach, although very natural, seems to be hard to find in
existing literature, especially in the relative case.

\medskip

The notation in this section is as follows. $R$ denotes a
universally Japanese ring. We also fix  a projective morphism $f:
X\to S$ of $R$-schemes of finite type with geometrically connected
fibers. We assume that $f$ is of pure relative dimension $d$. By
$\O_X(1)$ we denote an $f$-very ample line bundle on $X$. We also
fix a polynomial $P$.

\subsection{Universal families on relative moduli spaces}

Let us define the moduli functor $\M _{X/S,P}: (\Sch
/S)\longrightarrow (\Sets)$ by sending $T\to S$ to
$$\M _{X/S,P}(T)=\left\{
\begin{array}{l}
\hbox{isomorphism classes of $T$-flat families of Gieseker}\\
\hbox{semistable sheaves with Hilbert polynomial $P$}\\
\hbox {on the geometric fibres of }p: T\times_S X\to T \\
\end{array}
\right\} /\sim ,$$ where $\sim$ is the equivalence relation $\sim$
defined by $F\sim F'$ if and only if there exists an invertible
sheaf $K$ on $T$ such that $F\simeq F'\otimes p^*K$.

\begin{Thm} \emph{(see \cite{Ma1}, \cite{Ma2}, \cite{Si}, \cite{La1} and \cite{La2})}
There exists a projective $S$-scheme $M_{X/S,P}$, which uniformly
corepresents the functor $\M_{X/S,P}$.  Moreover, there is an open
subscheme $M^{s}_{X/S,P}\subset M_{X/S,P}$ that universally
corepresents the subfunctor $\M^{s}_{X/S,P}$ of families of
geometrically Gieseker stable sheaves.
\end{Thm}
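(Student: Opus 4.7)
The plan is to follow the classical GIT construction of moduli of semistable sheaves, performed relatively over $S$. The starting point is a boundedness statement: the set of isomorphism classes of Gieseker semistable sheaves with Hilbert polynomial $P$ on the geometric fibres of $f$ must form a bounded family. Over a universally Japanese base ring $R$ this is precisely the content of Langer's boundedness theorem, which yields a uniform Castelnuovo--Mumford bound $m_0$ such that every semistable $E$ with $P(E)=P$ on any geometric fibre $X_s$ is $m$-regular for all $m\ge m_0$. For such $m$ we have $h^0(E(m))=P(m)$, $h^i(E(m))=0$ for $i>0$, and $E(m)$ is globally generated.

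Next, I would parametrize the situation by a relative Quot scheme. Fixing $m\ge m_0$, every semistable $E$ appears as a quotient $q\colon \O_X(-m)^{P(m)}\twoheadrightarrow E$, giving a closed point in the $S$-projective scheme $Q=\Quot_{X/S}(\O_X(-m)^{P(m)},P)$, on which $H=\GL(P(m))$ acts naturally. Let $R^{ss}\subset Q$ denote the locally closed subset where $q$ is a quotient with $E$ torsion free, semistable, and with $H^0(q(m))$ an isomorphism; this is open in its closure, and the boundedness step shows that $R^{ss}$ surjects onto the set of isomorphism classes under consideration.

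The key technical step is comparison of stabilities. For $\ell\gg m$, the $H$-linearized line bundle $\cL_\ell=\det f_{Q*}(\widetilde{E}(\ell))$ (where $\widetilde{E}$ is the universal quotient) embeds the orbits into a projective bundle over $S$, and following Simpson--Langer one shows that a point $[q]\in Q$ is GIT-semistable with respect to $\cL_\ell$ if and only if it lies in $R^{ss}$. This proceeds by translating the Hilbert--Mumford weight of a one-parameter subgroup of $H$ into the Gieseker numerical invariant of the associated weighted filtration of $E$ and comparing the two via an asymptotic expansion in $\ell$; Langer's uniform regularity again enters to ensure that the subsheaves arising in such filtrations are controlled.

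Once this comparison is in place, GIT produces a uniformly categorical quotient $M_{X/S,P}:=R^{ss}/\!/H$, projective over $S$ because $\cL_\ell$ is $f$-relatively ample on $\overline{R^{ss}}$; the $H$-orbits on $R^{ss}$ are in bijection with $S$-equivalence classes, so $M_{X/S,P}$ uniformly corepresents $\M_{X/S,P}$. The open GIT-stable locus yields $M^s_{X/S,P}\subset M_{X/S,P}$, whose points have only scalar stabilizers, so one obtains a universal corepresentation of $\M^s_{X/S,P}$. The main obstacle throughout is the comparison of GIT (Hilbert--Mumford) semistability with Gieseker semistability in the relative, possibly positive-characteristic, setting: it is this identification that forces the use of Langer's refined boundedness and of a characteristic-free Hilbert--Mumford criterion in place of Simpson's original characteristic-zero argument.
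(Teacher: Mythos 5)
The paper does not prove this theorem but cites it as known from Maruyama, Simpson, and Langer, and your outline is precisely the standard construction carried out in those references: boundedness via Langer's theorem, the relative Quot scheme with its $\GL(P(m))$-action, the comparison of Hilbert--Mumford weights with Gieseker semistability, and the GIT quotient giving a uniform (resp.\ universal on the stable locus) corepresentation. Your sketch is a correct summary of that argument and matches the intended proof.
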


We are interested when the moduli scheme $M^{s}_{X/S,P}$
represents the functor $\M^{s}_{X/S,P}$. This is equivalent to
existence of a universal family on $M^{s}_{X/S,P}\times_S X$.

Let us recall that the moduli scheme $M^{s}_{X/S,P}$ is
constructed as a quotient of an appropriate subscheme $R^s$ of the
Quot-scheme $\Quot (\H; P) $ by $\PGL (V).$ Let $q^*\H\to {\ti F}$
denote the universal quotient on $R^s\times _SX$.

\begin{Prop}\label{universal_family}
{\rm (\cite[Proposition 4.6.2]{HL})} The moduli scheme
$M^{s}_{X/S,P}$ represents the functor $\M^{s}_{X/S,P}$ if and
only if there exists a $\GL (V)$-linearized line bundle $A$ on
$R^s$ on which elements $t$ of the centre $Z(\GL (V))\simeq \Gm$
act via multiplication by $t$. If such $A$ exists then $\Hom
(p^*A,{\ti F})$ descends to a universal family and any universal
family is obtained in such a way.
\end{Prop}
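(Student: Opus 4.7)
The plan is to adapt the proof of \cite[Proposition 4.6.2]{HL} to the relative setting by descent along the quotient morphism $R^s\to M^s_{X/S,P}$. Because geometrically stable sheaves have only scalar automorphisms, $\PGL(V)$ acts freely on $R^s$; the universal corepresentability of the stable locus reduces the relevant checks to geometric fibres, where classical GIT shows that $R^s\to M^s_{X/S,P}$ is a $\PGL(V)$-torsor in the fppf topology. Faithfully flat descent therefore identifies quasi-coherent sheaves on $M^s_{X/S,P}\times_S X$ with $\PGL(V)$-equivariant quasi-coherent sheaves on $R^s\times_S X$, and a universal family will be obtained by descending the universal quotient $\tilde F$.

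Now $\tilde F$ is $\GL(V)$-equivariant since $q^*\H$ is, and the $\GL(V)$-action on $R^s$ itself factors through $\PGL(V)$; so the sole obstruction to descending $\tilde F$ is the action of the centre $Z(\GL(V))\simeq \Gm$ on $\tilde F$, which is scalar multiplication of weight $1$. Given $A$ as in the statement, the sheaf $\Hom(p^*A,\tilde F)\simeq p^*A^{\vee}\otimes \tilde F$ is $\GL(V)$-linearized with trivial $Z$-action, hence descends to a coherent sheaf $\mathcal F$ on $M^s_{X/S,P}\times_S X$. Flatness of $\tilde F$ and $p^*A$ over $R^s$, together with faithfully flat base change, imply that $\mathcal F$ is flat over $M^s_{X/S,P}$ with the expected geometric fibres, so $\mathcal F$ is a universal family.

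For the converse and uniqueness, let $\mathcal F$ be a universal family. Its pull-back to $R^s\times_S X$ is a flat family of stable sheaves inducing the classifying morphism $R^s\to M^s_{X/S,P}$, so by the standard lemma identifying two such families with the same classifying morphism up to a twist from the base, it is isomorphic to $p^*L\otimes \tilde F$ for a unique line bundle $L$ on $R^s$. The $\PGL(V)$-equivariance of the pull-back of $\mathcal F$ forces $L$ to be $\GL(V)$-linearized with $Z$-weight $-1$, and then $A:=L^{-1}$ satisfies the requirement with $\Hom(p^*A,\tilde F)$ descending to $\mathcal F$. Two choices of $A$ differ by a $\GL(V)$-linearized line bundle of trivial $Z$-weight, which by descent is exactly the pull-back of a line bundle from $M^s_{X/S,P}$; this yields the final clause.

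The most delicate step specific to the relative setting is the passage from fppf descent on $R^s\to M^s_{X/S,P}$ to a genuine coherent, $M^s_{X/S,P}$-flat sheaf with the predicted fibres. Once the universal corepresentability of the stable locus is invoked to guarantee that the $\PGL(V)$-torsor structure and the descent formalism behave well in families, the rest of the argument is essentially formal and proceeds exactly as in the absolute case treated in \cite{HL}.
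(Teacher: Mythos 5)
Your argument is correct and is essentially the standard descent proof of \cite[Proposition 4.6.2]{HL}, which is exactly what the paper invokes here (the paper gives no independent proof, only the citation): the centre of $\GL(V)$ acts on $\ti F$ with weight $1$, twisting by $p^*A^{\vee}$ kills this weight, and faithfully flat descent along the $\PGL(V)$-torsor $R^s\to M^{s}_{X/S,P}$ then produces the universal family, with the converse obtained by comparing the pulled-back family with $\ti F$ up to a line-bundle twist. The relative setting introduces no new difficulty beyond what you already address, so nothing further is needed.
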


\subsection{Existence of compactified Picard schemes in the relative case}

For simplicity we assume that all geometric fibers of $f$ are
irreducible and reduced (hence they are varieties) and that $S$ is
connected.

Let us fix a polynomial $P$. For all locally noetherian $S$-schemes
$T\to S$ let us set
$$\PIC'_{X/S,P}(T)=\left\{
\begin{array}{l}
\hbox{isomorphism classes of invertible sheaves $L$ on $X_T=T\times _S X$}\\
\hbox{such that $\chi (X_t, L_t (n))=P(n)$ for every geometric $ t\in T$}\\
\end{array}
 \right\}.$$ Note that if $\PIC'_{X/S,P}(T)$ is non-empty then the
highest coefficient of $P$ is the same as the highest coefficient of
the Hilbert polynomial of $\O_{X_s}$ for any $s\in S$.

As before we introduce an equivalence relation $\sim$ on $\PIC'_{X/S,P}(T)$ by
$L\sim L'$ if and only if there exists an invertible sheaf $K$ on $T$
such that $L\simeq L'\otimes p^*K$. Then we can define \emph{the Picard functor}
$$\PIC_{X/S, P}: (\Sch /S)\longrightarrow (\Sets)$$
by sending an $S$-scheme $T$ to $\PIC_{X/S, P}(T)=\PIC'_{X/S, P}(T)/\sim$

\medskip

Let us also define the compactified relative Picard functors. There
are two different methods of compactification of the Picard scheme.
We can compactify the Picard scheme by adding all the rank $1$ torsion
free sheaves on the fibres of $X$ or only those rank $1$ torsion free
sheaves that are locally free on the smooth locus of the fibres. The
second method has the advantage of producing a smaller scheme.

Let us set
$${\overline{\PIC '}}_{X/S,P}(T)=\left\{
\begin{array}{l}
\hbox{isomorphism classes of $T$-flat sheaves $L$ on $X_T=T\times _S X$}\\
\hbox{such that $L_t$ is a torsion free, rank $1$ sheaf on $X_t$}\\
\hbox{and $\chi (X_t, L_t (n))=P(n)$ for every geometric $ t\in T$}\\
\end{array}
 \right\}.$$
As before we define  \emph{the compactified Picard functor}
$${\overline{\PIC}}_{X/S, P}: (\Sch /S)\longrightarrow (\Sets)$$ by
sending an $S$-scheme $T$ to $\overline{\PIC}_{X/S,
P}(T)=\overline{\PIC'}_{X/S, P}(T)/\sim$.

We also define  \emph{the small compactified Picard functor}
$${\overline{\PIC}}_{X/S, P}^{\sm}: (\Sch /S)\longrightarrow (\Sets)$$ by
sending an $S$-scheme $T$ to $$\overline{\PIC}_{X/S,
P}^{\sm}(T)=\left\{
\begin{array}{l}
L\in \overline{\PIC'}_{X/S, P}(T) \hbox{ such that
$L$ is locally free }\\
\hbox{on the smooth locus of $X_T/T$}\\
\end{array}
\right\}/\sim .$$

\begin{Thm} \label{Pic}
Assume that $f:X\to S$ has a section $g: S\to X$.

\begin{enumerate}
\item
There exists a quasi-projective $S$-scheme $\Pic_{X/S, P}$ that represents
the Picard functor $\PIC_{X/S, P}$.
\item
If $g (S)$ is contained in the smooth locus of $X/S$ then there
exists a projective $S$-scheme $\overline{\Pic}_{X/S, P}$ that
represents the compactified Picard functor $\overline{\PIC}_{X/S,
P}$. Moreover, $\overline{\Pic}_{X/S, P}$ contains a closed
$S$-subscheme $\overline{\Pic}^{\sm}_{X/S, P}$ that represents the
small compactified Picard functor $\overline{\PIC}^{\sm}_{X/S,
P}$.
\end{enumerate}
\end{Thm}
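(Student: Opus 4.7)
The plan is to realise each Picard-type scheme as a (sub-)moduli scheme of stable rank-one torsion-free sheaves and to promote coarse corepresentation to fine representation via Proposition \ref{universal_family}, with the section $g$ supplying the required rigidification.

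First, a torsion-free sheaf of rank one on an integral scheme has no proper saturated subsheaf of positive rank strictly smaller than one, hence is automatically Gieseker stable. Applied to the rank-one stratum of the moduli theorem recalled above (with Hilbert polynomial $P$), we obtain a projective $S$-scheme $M^{s}_{X/S,P}$ that universally corepresents the functor of $T$-flat families of rank-one torsion-free sheaves with fibrewise Hilbert polynomial $P$, modulo the twist-equivalence $\sim$. This is the candidate for $\overline{\Pic}_{X/S,P}$. The sub-functor cut out by the condition of being fibrewise locally free on the smooth part of $X/S$ is representable by a projective $S$-scheme, so its inclusion into $\overline{\Pic}_{X/S,P}$ is a monomorphism of projective $S$-schemes and hence a closed immersion, yielding $\overline{\Pic}^{\sm}_{X/S,P}$. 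The open sublocus on which the sheaf is fibrewise globally locally free gives the quasi-projective $\Pic_{X/S,P}$.

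To upgrade coarse to fine representation I invoke Proposition \ref{universal_family}: it suffices to exhibit on the Quot parameter scheme $R^s$ a $\GL(V)$-linearised line bundle $A$ on which the centre $Z(\GL(V))\simeq\Gm$ acts by scalar multiplication. Given the section $g\colon S\to X$, with induced $g_R\colon R^s\to R^s\times_S X$, the natural candidate is $A := g_R^*\ti F$, where $\ti F$ is the universal quotient on $R^s\times_S X$. The $\GL(V)$-linearisation of $\ti F$ restricts to $A$, and $\Gm$ acts on $A$ with weight one because it does so on $\ti F$; the only issue is whether $A$ is a line bundle. For part (1), $\ti F$ is a line bundle throughout the Picard stratum, so $A$ automatically is, and Proposition \ref{universal_family} furnishes the universal Poincar\'e line bundle on $\Pic_{X/S,P}\times_S X$. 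For the small compactification in part (2), the defining condition that $\ti F$ be locally free on the smooth locus, combined with $g(S)$ lying in that smooth locus, forces $\ti F$ to be locally free along $g_R(R^s)$, so again $A$ is a line bundle and the same argument applies.

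The main obstacle is the full compactified functor in part (2): here $\ti F$ may fail to be locally free along $g_R(R^s)$, so $g_R^*\ti F$ is only a coherent sheaf of varying fibre rank. To manufacture a $\GL(V)$-linearised line bundle of centre-weight one I would supplement the section pullback with determinant-of-cohomology line bundles $\det Rp_{*}(\ti F(n))$, each of which is $\GL(V)$-linearised of centre-weight $P(n)$, and take an appropriate combination in the Picard group of $R^s$ to achieve centre-weight one. Verifying that such a combination can always be found, and then descending $\Hom(p^*A,\ti F)$ to a genuine universal family on $\overline{\Pic}_{X/S,P}\times_S X$ compatibly with the twist-equivalence $\sim$, is where I expect the bulk of the technical work to lie.
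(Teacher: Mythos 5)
Your treatment of part (1), of the small compactified functor, and your observation that rank-one torsion-free sheaves on integral fibres are automatically stable (so that the full moduli space $M_{X/S,P}=M^{s}_{X/S,P}$ is available) all match the paper's argument. The genuine gap sits exactly where you locate ``the bulk of the technical work'': producing a centre-weight-one linearised line bundle for the \emph{full} compactified functor $\overline{\PIC}_{X/S,P}$. Your fallback device --- combinations of the determinant-of-cohomology bundles $\det Rp_{*}(\ti F(n))$, which carry centre-weight $P(n)$ --- cannot work in general: a combination $\bigotimes_i\bigl(\det Rp_{*}(\ti F(n_i))\bigr)^{a_i}$ has weight $\sum_i a_iP(n_i)$, so achieving weight $1$ requires $\gcd\{P(n):n\in\mathbb{Z}\}=1$. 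This fails already for rank-one sheaves of even Euler characteristic on a curve polarised by a line bundle of even degree, which is the classical obstruction to the existence of Poincar\'e bundles. Hence no such combination need exist, and the section must be exploited even at points where $\ti F$ fails to be locally free.

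The paper's way around this is to use the section in a derived fashion: since $g(S)$ lies in the smooth locus of $X/S$, the sheaf $\O_{g(S)}$ admits a \emph{finite} resolution $0\to E_n\to\dots\to E_0\to\O_{g(S)}\to 0$ by locally free sheaves on $X$ (finiteness is exactly where smoothness along the section enters, via the finite homological dimension of $\O_{g(s)}$ over $\O_{X_s}$). One then sets $A=\det p_{!}(\ti F\otimes q^{*}(\O_{g(S)}(m)))=\bigotimes_i(\det p_{*}(\ti F\otimes q^{*}(E_i(m))))^{(-1)^i}$ for $m\gg 0$. The centre acts on this with weight equal to the alternating sum $\sum_q(-1)^q\dim\mathrm{Tor}_q(\ti F_s,\O_{g(s)})$, which equals the rank of $\ti F_s$ at the smooth point $g(s)$, namely $1$, independently of $P$. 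A secondary, smaller gap: you assert that the subfunctor $\overline{\PIC}^{\sm}_{X/S,P}$ is representable by a projective scheme and deduce closedness from the fact that a proper monomorphism is a closed immersion; the paper instead proves directly that the corresponding locus $R_{\overline{\Pic}^{\sm}}$ in the Quot scheme is closed (via the Altman--Kleiman lemma applied to the universal quotient over the smooth locus of the fibres), which is the statement you need to establish rather than assume.
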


\begin{proof}
First let us remark that all the Picard functors ${\PIC}_{X/S, P}$,
$\overline{\PIC}_{X/S, P}$ and $\overline{\PIC}^{sm}_{X/S, P}$ are
subfunctors of the moduli functor $\M_{X/S, P}$. In fact, from our
assumptions it follows that $\overline{\PIC}_{X/S, P}=\M^s_{X/S,
P}=\M_{X/S, P}$.  Now we can construct ${\Pic}_{X/S, P}$,
$\overline{\Pic}_{X/S, P}$ and $\overline{\Pic}^{sm}_{X/S, P}$ as
Geometric Invariant Theory quotients of appropriate subschemes
$R_{\Pic}\subset R_{{\overline{\Pic}^{sm}}}\subset R_{\overline{\Pic }
}=R^s=R^{ss}$ of the Quot-scheme used to construct the moduli space
$M^s_{X/S, P}$ by $\GL (V)$. In fact all these quotients are $\PGL
(V)$-principal bundles.  To prove that $\overline{\Pic}^{\sm}_{X/S,
P}$ is a closed subscheme of $\overline{\Pic}_{X/S, P}$ it is
sufficient to see that $R_{{\overline{\Pic}^{sm}}}$ is a closed
subscheme of $ R_{\overline{\Pic } }$. This follows from \cite[Lemma
on p.~37]{AK2} applied to the universal quotient restricted to the
smooth locus of $R_{\overline{\Pic }}\times _SX\to R_{\overline{\Pic
}}$.

To prove 1 by (a slight generalization of) Proposition
\ref{universal_family} it is sufficient to show existence of a $\GL
(V)$-linearized line bundle $A_{\Pic}$ on $R_{\Pic}$ on which the
centre of $\GL (V)$ acts with weight $1$.

Let us set $A_{\Pic}=\det p_{*}({\ti F}\otimes q^*\O_{g (S)})$,
where $\ti F$ comes from the universal quotient on
$R_{\Pic}\times_S X$. The definition makes sense since $\ti F$ is
a line bundle on $R_{\Pic}\times_S X$ and $p_{*}({\ti F}\otimes
q^*\O_{g (S)})= (id_{R_{\Pic}}\times _S g)^*{\ti F}$ is also a
line bundle. The centre of $\GL (V)$ acts on the fibre of
$A_{\Pic}$ at $([\rho], x)\in R_{\Pic}\times _S X$ with weight
$\chi (\O_{X_{f(x)}}|_x)=1$, which implies the first assertion of
the theorem.

Now assume that $g (S)$ is contained in the smooth locus of $X/S$.
Then the same argument as above gives existence of the Poincare
sheaf on $\overline{\Pic}^{\sm}_{X/S, P}$. Existence of the
Poincare sheaf on $\overline{\Pic}_{X/S, P}$ is slightly more
difficult. First let us show that there exists a resolution
$$0\to E_n\to \dots \to E_0\to \O_{g (S)}\to 0,$$ where $E_i$ are
locally free sheaves on $X$. Since there are sufficiently many
locally free sheaves on $X$ we can construct the resolution up to
step $E_{n-1}$, where $n$ is the relative dimension of $X/S$. Then
the kernel of $E_{n-1}\to E_{n-2}$ is also locally free. Indeed,
it is sufficient to check it on the geometric fiber $X_s$ over
$s\in S$, where one can use the fact that the homological
dimension of $\O_{g(s)}$ is equal to $n$ (this follows from
the smoothness assumption).

Tensoring with a high tensor power $\O_X(m)$ we can assume that all the
higher direct images of ${\ti F}\otimes q^*(E_i(m))$ under the
projection $p$ vanish. In particular, all sheaves $p_*({\ti F}\otimes
q^*(E_i (m)))$ are locally free. Then we can set
$$A_{\overline{\Pic}}=\det p_{!}({\ti F}\otimes q^*(\O_{g (S)}(m)))=
\bigotimes _i(\det p_*({\ti F}\otimes q^*(E_i (m))))^{(-1)^i}.$$ Obviously,
the centre of $\GL (V)$ still acts on the fibres of $A_{\overline{\Pic}}$
with weight $1$. Hence the theorem follows from Proposition \ref{universal_family}.
\end{proof}

\medskip

\begin{Remark}
Note that the second part of Theorem \ref{Pic} does not
immediately follow from \cite{AK1} and \cite{AK2}.
Representability of (compactified) Picard functors is proven there
only in \'etale topology or after rigidification (see, e.g.,
\cite[Theorems 3.2 and 3.4]{AK2}). Rigidification of the
compactified Picard functor amounts in our case to restricting to
the open subset of $R_{\overline{\Pic}}$, where the restriction of
$\ti F$ to $g (S)$ is invertible. Then by the same argument as in
the proof of 1 of Theorem \ref{Pic} we can construct the scheme
representing the corresponding rigidified Picard functor obtaining
\cite[Theorem 3,4]{AK2}. However, we prefer to make a stronger
assumption as in 2 to construct the projective Picard scheme.
\end{Remark}

\subsection{Moduli spaces of swamps}

Let us fix non-negative integers $a$ and $b$ and consider a $\GL
(V)$-module $(V^{\otimes a})^{\oplus b}$. Let $\rho_{a,b}\colon
{\rm GL} (V) \to {\rm GL}(V^{\otimes a})^{\oplus b})$ be the
corresponding representation. If $\A$ is a sheaf of rank $r=\dim
V$ then we can associate to it a sheaf $ \A_{\rho
_{a,b}}=(\A^{\otimes a})^{\oplus b}$. On the open set where $\A$
is locally free, $\A_{\rho _{a,b}}$ is a locally free sheaf
associated to the principal bundle obtained by extension from the
frame bundle of $\A$.

Let us recall that a \emph{$\rho_{a,b}$-swamp} is a triple $(\A,
L, \varphi)$ consisting of a torsion free sheaf $\A$ on $X$, a
rank $1$ torsion free sheaf $L$ on $X$ and a non-zero homomorphism
$\varphi : \A_{\rho _{a,b}}\to L$.

Let us fix a positive polynomial $\delta$ of degree $\le d-1$ with
rational coefficients. Let us write
$\delta(m)={\overline{\delta}}\frac{m^{d-1}}{(d-1)!}+O(m^{d-2})$.

For a weighted filtration $(\A_{\bullet}, \alpha _{\bullet})$ of
$\A$ we set $r_i=\rk \A_i$ and we consider a vector $\gamma \in
\QQ ^r$ defined by
$$\gamma =\sum \alpha _i
\bigl(\underbrace{r_i-r,...,r_i-r}_{r_i\times},\underbrace{r_i,...,r_i}_{(r-r_i)\times}\bigr)
.
$$
Let $\gamma _j$ denote the $j$th component of $\gamma$. We set
$$
\mu\bigl(\A_\bullet,{\alpha}_\bullet;\varphi\bigr)=-\min\Bigl\{\,\gamma_{i_1}+\cdots+\gamma_{i_{a}}\,\big|\,
(i_1,...,i_{a})\in I: \varphi_{|(\A_{i_1}\otimes \cdots\otimes
\A_{i_{a}})^{\oplus b}} \not\equiv 0\,\Bigr\},$$ where $I=\{1,...,
s+1 \}^{\times a}$ is the set of all multi-indices.

Let us recall that a $\rho_{a,b}$-swamp $(\A, L, \varphi)$ is
\emph{$\delta$-(semi)stable} if for all weighted filtrations
$(\A_{\bullet}, \alpha _{\bullet})$ we have
$$M(\A_\bullet,{\alpha}_\bullet)+\mu\bigl(\A_\bullet,{\alpha}_\bullet;\varphi\bigr) \delta (\ge ) 0 .$$

A $\rho_{a,b}$-swamp $(\A, L, \varphi)$ is \emph{slope
$\overline{\delta}$-(semi)stable} if for all weighted filtrations
$(\A_{\bullet}, \alpha _{\bullet})$ we have
$$L(\A_\bullet,{\alpha}_\bullet)+\mu\bigl(\A_\bullet,{\alpha}_\bullet;\varphi\bigr) \overline{\delta} (\ge ) 0 .$$

\medskip

Now we can state the most general existence result for moduli
spaces of swamps. We keep the notation from the beginning of this
section.

\begin{Thm}
Let us fix an $S$-flat family $\cL$ of pure sheaves of dimension
$d$ on the fibres of $f: X\to S$. Assume that either $d=1$ or $f$
has only irreducible and reduced geometric fibres. Then there
exists a coarse $S$-projective moduli space for
$\delta$-semistable $S$-flat families of $\rho_{a,b}$-swamps $(\A,
\cL, \varphi)$ such that for every $s\in S$ the restriction
$\A|_{X_s}$ has Hilbert polynomial $P$.
\end{Thm}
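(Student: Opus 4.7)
The plan is to adapt the standard GIT construction of moduli of swamps (going back to Schmitt and Gomez--Langer--Schmitt--Sols) to the relative setting over $S$ with the target sheaf $\cL$ fixed. First I would establish boundedness of the family of $\delta$-semistable swamps $(\A, \cL_s, \varphi)$ with $\A|_{X_s}$ of Hilbert polynomial $P$. Plugging a one-step saturated filtration $0 \subset \A_1 \subset \A$ into the $\delta$-semistability inequality, together with the bound $\mu(\A_\bullet, \alpha_\bullet; \varphi) \le a\alpha_1(\rk \A - \rk \A_1)$ coming directly from the definition of $\gamma$, yields an upper bound on $P(\A_1)/\rk \A_1$ depending only on $P$, $\delta$, $a$, $b$. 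By Le Potier--Simpson the sheaves $\A$ form a bounded family, and one may choose an integer $N_0$ such that $\A(N_0)$ is globally generated and acyclic for every such semistable $\A$.

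Next I would build the parameter space. Set $V = k^{P(N_0)}$ and $\H = V \otimes \O_X(-N_0)$, and take $R \subset \Quot(\H, P)$ to be the open subscheme where the universal quotient $\tilde \A$ is fibrewise torsion free and $V \otimes \O_R \to p_* \tilde\A(N_0)$ is an isomorphism. For $m \gg 0$ the sheaf $\E = p_*\bigl(\Hom(\tilde\A_{\rho_{a,b}}, q_X^*\cL)(m)\bigr)$ is locally free on $R$, and
$$ Z = \mathbb{P}(\E) \longrightarrow R $$
parametrizes nonzero homomorphisms $\varphi: \tilde\A_{\rho_{a,b}} \to q_X^*\cL$ up to scalar. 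The natural $\GL(V)$-action on $R$ lifts to $Z$; the centre acts trivially on $Z$, and $Z$ is projective over $S$. Choose a $\GL(V)$-linearization on $Z$ combining the standard ample line bundle of $\Quot(\H, P)$ with the tautological bundle of $Z \to R$, the coefficients being tuned to $\delta$. The moduli space will be the GIT quotient $Z^{ss}/\!/\GL(V)$, and its projectivity over $S$ follows from standard GIT in families.

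The key verification---and the main obstacle---is that GIT (semi)stability at a point $(q, [\varphi]) \in Z$ coincides with $\delta$-(semi)stability of the associated swamp $(\A, \cL_s, \varphi)$. By Hilbert--Mumford, one tests one-parameter subgroups $\lambda: \Gm \to \GL(V)$; each $\lambda$ gives a weighted filtration $(V_\bullet, \alpha_\bullet)$ of $V$ which, thanks to the acyclicity of $\A(N_0)$, is in bijection with weighted filtrations $(\A_\bullet, \alpha_\bullet)$ of $\A$. A direct fibrewise computation, following Schmitt, shows that the $\mu$-weight of $\lambda$ at $(q, [\varphi])$ equals (up to a universal positive factor)
$$ M(\A_\bullet, \alpha_\bullet) + \delta \cdot \mu(\A_\bullet, \alpha_\bullet; \varphi). $$
The delicate part is the contribution of the tautological factor on $Z \to R$, which must be identified with $\delta \cdot \mu(\A_\bullet, \alpha_\bullet; \varphi)$ via a combinatorial analysis over the multi-index set $I$ and the weights $\gamma_j$. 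The hypothesis that $d = 1$ or that the geometric fibres of $f$ are integral is used to guarantee that $\E$ is well-behaved (in particular that $p_*\Hom(\cdot, q_X^*\cL)$ commutes with base change after sufficient twisting), and that the auxiliary parameter schemes, including those of Theorem \ref{Pic}, have the expected properties.
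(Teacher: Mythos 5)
Your outline is the standard G\'omez--Sols/Schmitt GIT construction of moduli of swamps, which is indeed what the proof rests on; but it is silent or goes wrong at exactly the two points where the statement exceeds the existing literature. First, projectivity: your $Z\to R$ sits over the \emph{open} locus $R$ of $\Quot(\H,P)$ where the universal quotient is fibrewise torsion free, so $Z$ is not projective over $S$ and ``projectivity follows from standard GIT in families'' does not follow. One must work over a compactification (the closure of $R$ in the Quot scheme, or parametrize $\varphi$ by its effect on global sections inside a fixed projective space) and then prove that every GIT-semistable point already has torsion-free $\A$. It is precisely here that the hypothesis ``$d=1$ or integral geometric fibres'' enters: for integral fibres this is \cite[Proposition 2.12]{Bh}, while for $d=1$ one can drop integrality because torsion submodules of sheaves on curves are detected by global sections of any twist (\cite[Remark 4.4.9]{HL}). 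Your attribution of the hypothesis to ``$p_*\Hom(\cdot,q_X^*\cL)$ commuting with base change'' misidentifies where it is actually used.

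Second, the theorem allows $\cL$ to be merely pure of dimension $d$, not locally free, whereas \cite{GS} and \cite{Bh} assume $\cL$ locally free. The extra input needed is an extension property for homomorphisms into the pullback of $\cL$ across the special fibre of a one-parameter degeneration (Bhosle's condition (2.19)), which this paper supplies for \emph{all} varieties via Lemma \ref{Bhosle's-condition}: the pullback of a pure sheaf to $X\times C$ has depth $\ge 2$ along any proper closed subscheme $Y$ of the special fibre, hence equals $j_*j^*$ of itself for $j$ the inclusion of the complement of $Y$. Without this the semistable-reduction/properness step fails for non-locally-free $\cL$, and your proposal never addresses that case. The remaining ingredients of your sketch (boundedness, the Hilbert--Mumford comparison of GIT weights with $M(\A_\bullet,\alpha_\bullet)+\delta\cdot\mu(\A_\bullet,\alpha_\bullet;\varphi)$, and the passage to the relative setting via \cite{La1} and \cite{La2}) coincide with the sources the paper invokes.
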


In case when $X$ is a smooth complex projective variety this
theorem was proved by G\'omez and Sols in \cite{GS}, and later
generalized by Bhosle to singular complex varieties satisfying
Bhosle's condition in \cite{Bh}. Note that in \cite{GS} and
\cite{Bh} the authors considered only the case when $\cL$ is
locally free. However, this is not necessary due to Lemma
\ref{Bhosle's-condition} and it is sufficient to assume that $\cL$
is torsion free. Generalization to the relative case in arbitrary
characteristic follows from \cite{La1} and \cite{La2}. We need
only to comment why one does need to require that the fibres of
$f$ are irreducible or reduced in the curve case. This fact
follows from \cite[Remark 4.4.9]{HL}: torsion submodules for
sheaves on curves are detected by any twist of its global
sections. This allows to omit using \cite[Proposition 2.12]{Bh} in
the curve case. In particular, this shows that all the results of
Sorger \cite{So} are now a part of the more general theory.

\medskip

We also have another variant of the above theorem (cf.
\cite[Theorem 2.3.2.5]{Sch5}):

\begin{Thm}
Let us fix a Hilbert polynomial $Q$. Assume that all geometric
fibers of $f$ are irreducible and reduced and assume that $f:X\to
S$ has a section $g: S\to X$ such that $g (S)$ is contained in the
smooth locus of $X/S$. Then there exists a coarse
moduli space for $\delta$-semistable $S$-flat families of
$\rho_{a,b}$-swamps $(\A, \cL, \varphi)$ such that for every $s\in
S$ the restriction $\A|_{X_s}$ has Hilbert polynomial $P$ and the
restriction $\cL |_{X_s}$ has Hilbert polynomial $Q$. This moduli space
is projective over  $\overline{\Pic}_{X/S, Q}$.
\end{Thm}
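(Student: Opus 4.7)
The strategy is to reduce to the previous theorem by letting $\cL$ vary over the compactified Picard scheme and using its Poincar\'e sheaf as the fixed family.

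Since $f:X\to S$ admits a section $g$ with values in the smooth locus of $X/S$, part~2 of Theorem~\ref{Pic} produces a projective $S$-scheme $T:=\overline{\Pic}_{X/S, Q}$ \emph{representing} the compactified Picard functor $\overline{\PIC}_{X/S, Q}$. Representability provides a universal, $T$-flat, rank $1$ torsion free sheaf $\cL^{\rm univ}$ on $X_T:=T\times_S X$ whose restriction to every geometric fiber of $f_T:X_T\to T$ has Hilbert polynomial $Q$.

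Now apply the previous theorem to the flat projective morphism $f_T:X_T\to T$ (whose geometric fibers are the same as those of $f$, hence irreducible and reduced) with the fixed family $\cL:=\cL^{\rm univ}$. This yields a $T$-projective coarse moduli space ${\widetilde M}$ for $\delta$-semistable families of $\rho_{a,b}$-swamps of the form $(\A, \cL^{\rm univ}|_{X_{T'}}, \varphi)$ with fiberwise Hilbert polynomial $P$ on $\A$. Composing with the projective structural morphism $T\to S$ exhibits ${\widetilde M}$ as projective over $\overline{\Pic}_{X/S, Q}$ and hence over $S$.

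It remains to check that ${\widetilde M}$ coarsely represents the intended moduli functor for swamps $(\A,\cL,\varphi)$ with $\cL$ allowed to vary. Given any $S$-scheme $T'$ and a $\delta$-semistable family $(\A,\cL,\varphi)$ on $X_{T'}$ satisfying the prescribed fiberwise Hilbert polynomials, the rank $1$ torsion free sheaf $\cL$ induces, by the representability of $\overline{\PIC}_{X/S, Q}$, a unique classifying morphism $\tau:T'\to T$ together with a line bundle $K$ on $T'$ and an isomorphism $\cL\simeq (\mathrm{id}_X\times\tau)^*\cL^{\rm univ}\otimes p_{T'}^*K$. Composing $\varphi$ with this identification presents the swamp as a pullback along $\tau$ of a family of the type classified by ${\widetilde M}$, yielding a canonical morphism $T'\to{\widetilde M}$; the corresponding functorial properties follow from those of the previous theorem. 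The main subtlety worth watching is the bookkeeping of the Picard-functor ambiguity $L\sim L\otimes p^*K$ against the natural equivalence on the swamp moduli functor, which absorbs this twist cleanly; once this is handled, both universality and projectivity over $\overline{\Pic}_{X/S, Q}$ are immediate from the previous theorem and Theorem~\ref{Pic}.
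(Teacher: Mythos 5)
Your argument is correct and is precisely the route the paper intends: the paper gives no explicit proof of this theorem (it only points to Schmitt's book), but the whole purpose of Theorem~\ref{Pic}(2) --- constructing $\overline{\Pic}_{X/S,Q}$ as a projective $S$-scheme \emph{representing} $\overline{\PIC}_{X/S,Q}$, hence carrying a Poincar\'e sheaf --- is to feed that universal sheaf into the preceding fixed-$\cL$ swamp theorem over the base $T=\overline{\Pic}_{X/S,Q}$, exactly as you do. The twist ambiguity $\cL\sim\cL\otimes p^*K$ that you flag is indeed the only point requiring care, and it is absorbed by the analogous equivalence relation built into the swamp moduli functor.
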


\section{Tensor product of semistable sheaves on non-normal varieties}

Let $(X, \O_X (1))$ be a $d$-dimensional polarized projective
variety defined over an algebraically closed field $k$.

Let $\nu :{\ti X}\to X$ denote the normalization of $X$ and let
$E$ be a coherent $\O_X$-module. Since $\nu$ is a finite morphism,
there exists a well defined coherent $\O_{\ti X}$-module $\nu
^{!}E$ corresponding to the $\nu_*{\O_{\ti X}}$-module $\Hom (\nu
_*{\O_{\ti X}}, E)$.  If $E$ is torsion free then we have $\Hom
_{\O_X} (\nu _*{\O_{\ti X}}/\O_X, E)=0$. Hence
$$\nu_* (\nu ^{!}E)=\Hom _{\O_X} (\nu _*{\O_{\ti X}}, E)\subset \Hom
_{\O_X} ({\O_{X}}, E)=E$$ and $\nu ^{!}E$ is also torsion free.

\begin{Lemma}\label{mu-mu}
There exists a constant $\alpha$ (depending only on the variety
$X$) such that for any rank $r$ torsion free sheaf $E$ on $X$ we
have
$$0\le \mu (E) -\mu (\Hom (\nu _*\O_{\ti X}, E))\le \alpha.$$
\end{Lemma}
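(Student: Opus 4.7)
The plan is to exploit the inclusion $E':=\Hom(\nu_*\O_{\ti X}, E)\subset E$ noted just before the statement. Since $\nu$ is an isomorphism over the normal locus of $X$, the quotient $E/E'$ is supported on the non-normal locus, which has codimension $\ge 1$. This forces $\rk E'=r$ and $\alpha_{d-1}(E/E')\ge 0$, so $\deg E-\deg E'=\alpha_{d-1}(E/E')\ge 0$, giving the lower bound $\mu(E')\le \mu(E)$. For the upper bound it is enough to show $\alpha_{d-1}(E/E')\le r\alpha$ for a constant $\alpha$ depending only on $X$.

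Let $\eta_1,\dots,\eta_n$ be the generic points of the $(d-1)$-dimensional components $Z_j$ of the non-normal locus; then
$$\alpha_{d-1}(E/E')=\sum_j{\rm length}_{\O_{X,\eta_j}}\bigl((E/E')_{\eta_j}\bigr)\cdot \deg Z_j,$$
so the problem is purely local. Fix $\eta=\eta_j$ and set $A=\O_{X,\eta}$, $\ti A=(\nu_*\O_{\ti X})_\eta$, $M=E_\eta$. Because $X$ is an integral variety, $A$ is a one-dimensional local domain and $\ti A$ is a semilocal Dedekind domain, hence a PID. Let $\mathfrak{c}={\rm Hom}_A(\ti A, A)\subset A$ be the conductor. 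Inside $K(A)^r=M\otimes_A K(A)$, a short check identifies ${\rm Hom}_A(\ti A, M)$ with the largest $\ti A$-submodule of $M$ (send $\phi$ to $\phi(1)$), and the key chain of inclusions
$$\mathfrak{c}\cdot \ti A M\ \subset\ {\rm Hom}_A(\ti A, M)\ \subset\ M\ \subset\ \ti A M$$
follows from $\mathfrak{c}\cdot \ti A=\mathfrak{c}\subset A$: the product $\mathfrak{c}\cdot \ti A M$ equals $\mathfrak{c} M$, is an $\ti A$-submodule of $M$, and hence is contained in the largest such.

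Since $\ti A$ is a PID and $\ti A M$ is finitely generated torsion-free of generic rank $r$, it is $\ti A$-free of rank $r$, so ${\rm length}_A(\ti A M/\mathfrak{c}\cdot \ti A M)=r\cdot{\rm length}_A(\ti A/\mathfrak{c})$. Combined with the chain above this yields ${\rm length}_A(M/{\rm Hom}_A(\ti A, M))\le r\cdot{\rm length}_A(\ti A/\mathfrak{c})$. Summing over $j$ produces $\alpha_{d-1}(E/E')\le r\alpha$ with $\alpha=\alpha_{d-1}(\nu_*\O_{\ti X}/\mathfrak{c})$, a constant depending only on $(X,\O_X(1))$, and division by $r$ finishes the proof. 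The main technical step is verifying the chain of inclusions (especially the leftmost one); once in place, the rest reduces to a length computation over the PID $\ti A$ and the standard formula expressing $\alpha_{d-1}$ of a sheaf supported in dimension $d-1$ through stalk lengths at codimension-one generic points.
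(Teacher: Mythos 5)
Your argument is correct, and it takes a genuinely different route from the paper's proof of this lemma. The paper works with the exact sequence $0\to\Hom(\nu_*\O_{\ti X},E)\to E\to\Ext^1_{\O_X}(\nu_*\O_{\ti X}/\O_X,E)$ and bounds the generic ranks of the $\Ext^1$-sheaf along the codimension-one components of the non-normal locus; to do this it chooses an auxiliary subsheaf $G\subset E$ (locally free in codimension one, with torsion quotient, obtained from general sections of $E(m)$) and compares $\Ext^1(\nu_*\O_{\ti X}/\O_X,E)$ with $\Ext^1(\nu_*\O_{\ti X}/\O_X,\O_X^{r})$ and $\Ext^1(\nu_*\O_{\ti X}/\O_X,E/G)$ through two exact sequences --- a soft boundedness argument that does not produce an explicit constant at this stage. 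You instead localize at the codimension-one generic points of the non-normal locus, where $A=\O_{X,\eta}$ is a one-dimensional local domain whose normalization $\ti A$ is a semilocal Dedekind domain, identify $\Hom_A(\ti A,M)$ with the largest $\ti A$-submodule of $M$ (this uses torsion-freeness of $M=E_\eta$, which holds), and squeeze it between $\mathfrak{c}\cdot\ti A M$ and $\ti A M$ via the conductor identity $\mathfrak{c}\,\ti A=\mathfrak{c}$; freeness of $\ti A M$ over the PID $\ti A$ then gives the uniform bound ${\rm length}_A\bigl(M/\Hom_A(\ti A,M)\bigr)\le r\cdot{\rm length}_A(\ti A/\mathfrak{c})$, and the standard dévissage formula for $\alpha_{d-1}$ of a sheaf of dimension $\le d-1$ globalizes this. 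All steps check out. What your route buys is the explicit value $\alpha=\alpha_{d-1}(\nu_*\O_{\ti X}/\C)$, which equals $-\mu(\C_{\ti X})$; this is precisely the sharper constant that the paper only recovers later, in the Remark following Lemma \ref{mu-mu2} (by combining Lemma \ref{mu-mu2} with Corollary \ref{useful}). In effect you prove the lemma together with that refinement in one self-contained local computation, at the cost of invoking a little structure theory (Krull--Akizuki/finiteness of normalization in dimension one) that the paper's softer argument avoids.
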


\begin{proof}
We have an exact sequence
$$0\to \Hom _{\O_X} (\nu _*{\O_{\ti X}}, E)\to E\to \Ext
^1_{\O_X}({\nu _*{\O_{\ti X}/\O_X}},E).$$  For
large $m$ we have
$$P(\Hom _{\O_X} (\nu _*{\O_{\ti X}}, E))(m)\le P(E)(m)$$
and, since $\Hom _{\O_X} (\nu _*{\O_{\ti X}}, E)$ and $E$ have the same rank,
we have
$$\mu (\Hom _{\O_X} (\nu _*{\O_{\ti X}}, E))\le \mu (E).$$
On the other hand we have
$$\alpha _{d-1}(E)\le \alpha _{d-1}(\Hom _{\O_X} (\nu _*{\O_{\ti X}},
E))+\alpha _{d-1}(\Ext ^1_{\O_X}({\nu _*{\O_{\ti X}/\O_X}},E)).$$
Note that $\Ext ^1_{\O_X}({\nu _*{\O_{\ti X}/\O_X}},E)$ is
supported on the support of ${\nu _*{\O_{\ti X}/\O_X}}$. Let
$Y_1,\dots ,Y_k$ denote codimension $1$ irreducible components of
the support of ${\nu _*{\O_{\ti X}/\O_X}}$.  Then $\alpha
_{d-1}(\Ext ^1_{\O_X}({\nu _*{\O_{\ti X}/\O_X}},E))$ can be
bounded from the above using the ranks of $\Ext ^1_{\O_X}({\nu
_*{\O_{\ti X}/\O_X}},E)$ at $Y_1, \dots, Y_k$. Hence by the above
inequality, to prove the lemma it is sufficient to bound these
ranks.

There exists a subsheaf $G\subset E$ such that $G$ is locally free
(we need only locally free in codimension $1$) and $E/G$ is
torsion (i.e., equal to zero at the generic point of $X$).  This
can be constructed by taking $r$ general sections of $E(m)$ for
large $m$ and twisting the image of $\O_X^r\subset
H^0(E(m))\otimes \O_X\to E(m)$ by $\O_X (-m)$.

Then we have an exact sequence
$$0=\Hom (\nu _* \O_{\ti X}/\O_X, E)\to \Hom (\nu _* \O_{\ti X}/\O_X,
E/G)\to \Ext^1 (\nu _* \O_{\ti X}/\O_X, G)$$ Note that the sheaves
in this sequence are supported on $\bigcup Y_i$ and the rank of $
\Ext^1 (\nu _* \O_{\ti X}/\O_X, G)$ on $Y_i$ is the same as the
rank of $ \Ext^1 (\nu _* \O_{\ti X}/\O_X, \O_X^r)$ on $Y_i$. In
particular, it depends only on the rank $r$ and it is independent
of $E$. Hence the dimensions of $\Hom (\nu _* \O_{\ti X}/\O_X,
E/G)$ at the generic points of $Y_1, \dots , Y_k$ are bounded from
the above by a linear function of $r$. But this implies that the
ranks of $E/G$, and hence also of $\Ext^1 (\nu _* \O_{\ti X}/\O_X,
E/G)$, on $Y_1, \dots , Y_k$ are bounded independently of $E$. Now
we can use the sequence
$$ \Ext^1 (\nu _* \O_{\ti X}/\O_X, G)\to \Ext^1 (\nu _* \O_{\ti
X}/\O_X, E)\to \Ext^1 (\nu _* \O_{\ti X}/\O_X, E/G)$$ to bound the
ranks of $\Ext ^1_{\O_X}({\nu _*{\O_{\ti X}/\O_X}},E)$ on $Y_1, \dots,
Y_k$.
\end{proof}

\begin{Cor}\label{useful}
Let us set $\beta=\alpha_{d-1} (\O_{\ti X})-\alpha _{d-1}(\O_X)$.
 Then for any rank
$r$ torsion free sheaf $E$ on $X$ we have
$$\beta\le \mu (E)-\mu (\nu ^{!}E) \le \alpha+\beta,$$ where the slopes are computed
with respect to $\O_X(1)$ on $X$ and $\nu^*\O_X(1)$ on $\ti X$.
\end{Cor}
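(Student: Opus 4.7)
The plan is to reduce the corollary to Lemma \ref{mu-mu} by carefully comparing the Hilbert polynomial of $\nu^! E$ on $\ti X$ (polarized by $\nu^*\O_X(1)$) with that of its pushforward $\nu_*\nu^! E = \Hom_{\O_X}(\nu_*\O_{\ti X}, E)$ on $X$ (polarized by $\O_X(1)$). Lemma \ref{mu-mu} already gives $0\le \mu(E)-\mu(\Hom(\nu_*\O_{\ti X}, E))\le \alpha$, so the only new content is a degree correction of exactly $\beta$ when one passes the lower sheaf up to the normalization.

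The key computation uses that $\nu$ is finite, so by the projection formula $\chi(\ti X, \nu^!E\otimes \nu^*\O_X(m))=\chi(X,\nu_*\nu^!E\otimes \O_X(m))$, which yields the equality of Hilbert polynomials
$$P_{\ti X}(\nu^!E)(m)=P_X(\nu_*\nu^!E)(m)=P_X(\Hom(\nu_*\O_{\ti X}, E))(m).$$
In particular all coefficients $\alpha_i$ coincide. Applied to $E=\O_X$ this gives $\alpha_d(\O_{\ti X})=\alpha_d(\O_X)$ (so $\nu$ is birational in the sense that the top Hilbert coefficients match), hence the rank computed on $\ti X$ agrees with the rank computed on $X$ for sheaves related by $\nu_*$; in our case $\rk_{\ti X}\nu^!E=\rk_X\nu_*\nu^!E=r$.

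Now the difference of slopes comes solely from the conventional correction in the definition of degree. Using $\deg F=\alpha_{d-1}(F)-\rk F\cdot\alpha_{d-1}(\O)$ on each variety, one obtains
$$\deg_X(\nu_*\nu^!E)-\deg_{\ti X}(\nu^!E)=r\bigl(\alpha_{d-1}(\O_{\ti X})-\alpha_{d-1}(\O_X)\bigr)=r\beta,$$
and dividing by $r$ gives $\mu(\Hom(\nu_*\O_{\ti X},E))-\mu(\nu^!E)=\beta$. Adding this to the bounds from Lemma \ref{mu-mu} yields
$$\beta\le \mu(E)-\mu(\nu^!E)\le \alpha+\beta,$$
as required.

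No serious obstacle is expected; the only step requiring a little care is checking that the Hilbert polynomial (and in particular the $\alpha_{d-1}$ coefficient) really is preserved under $\nu_*$ with respect to the pullback polarization, but this is immediate from finiteness of $\nu$ and the projection formula.
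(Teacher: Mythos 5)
Your proof is correct and follows essentially the same route as the paper: both use the identity $\chi(\ti X, F\otimes\nu^*\O_X(m))=\chi(X,\nu_*F\otimes\O_X(m))$ for the finite morphism $\nu$ to deduce $\mu(\nu_*F)-\mu(F)=\beta$, and then combine this (applied to $F=\nu^!E$, with $\nu_*\nu^!E=\Hom(\nu_*\O_{\ti X},E)$) with Lemma \ref{mu-mu}. Your spelled-out bookkeeping of the $\alpha_{d-1}$ coefficients and the rank normalization is exactly what the paper leaves implicit.
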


\begin{proof}
For any sheaf $F$ on $\ti X$ we have
$$\chi ({\ti X}, F\otimes \nu^*\O_X(m))= \chi (X, \nu_*F\otimes
\O_X(m)).$$ This implies that
$$\mu (\nu_*F)-\mu (F)=\alpha_{d-1} (\O_{\ti X})-\alpha _{d-1}(\O_X)=\beta.$$
Therefore, since
$$\nu_* (\nu ^{!}E)=\Hom _{\O_X} (\nu _*{\O_{\ti X}}, E),$$
we have
$$\begin{array}{rcl} \mu (E)-\mu (\nu ^{!}E)&=& (\mu (E)-\mu
(\Hom (\nu _*\O_{\ti X}, E)))+(\mu(\nu_* (\nu
^{!}E))-\mu (\nu ^{!}E))\\
&=& (\mu (E)-\mu (\Hom (\nu _*\O_{\ti X}, E)))+\beta.
\end{array}$$
Now the corollary follows from Lemma \ref{mu-mu}.
\end{proof}

\begin{Cor} \label{useful2}
For any rank $r$ torsion free sheaf $E$ on $X$ we have
$$\beta \le \mu _{\max}(E)-\mu _{\max}(\nu ^{!}E)\le \alpha+
\beta .$$
\end{Cor}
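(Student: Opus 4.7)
The plan is to reduce Corollary \ref{useful2} to Corollary \ref{useful} by applying the latter to a maximal destabilizing subsheaf on each side and tracking how such subsheaves transfer between $X$ and $\tilde X$ under $\nu^{!}$ and $\nu_{*}$.

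For the upper bound $\mu_{\max}(E) - \mu_{\max}(\nu^{!}E) \le \alpha + \beta$, I would choose a torsion-free subsheaf $F \subseteq E$ with $\mu(F) = \mu_{\max}(E)$. The functor $\nu^{!} = \Hom_{\O_X}(\nu_{*}\O_{\tilde X}, -)$ is left exact, so the inclusion $F \hookrightarrow E$ yields $\nu^{!}F \hookrightarrow \nu^{!}E$. Since $\nu$ is an isomorphism over the smooth locus of $X$, the ranks are preserved, and Corollary \ref{useful} applied to $F$ gives
$$\mu_{\max}(\nu^{!}E) \ge \mu(\nu^{!}F) \ge \mu(F) - (\alpha + \beta) = \mu_{\max}(E) - (\alpha + \beta).$$

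For the lower bound $\mu_{\max}(E) - \mu_{\max}(\nu^{!}E) \ge \beta$, I would dualize the argument: pick a torsion-free subsheaf $G \subseteq \nu^{!}E$ with $\mu(G) = \mu_{\max}(\nu^{!}E)$. Since $\nu$ is finite, $\nu_{*}$ is exact, so $\nu_{*}G \subseteq \nu_{*}(\nu^{!}E) = \Hom_{\O_X}(\nu_{*}\O_{\tilde X}, E) \subseteq E$, realizing $\nu_{*}G$ as a torsion-free subsheaf of $E$. The identity $\mu(\nu_{*}G) - \mu(G) = \beta$ established in the proof of Corollary \ref{useful} then yields
$$\mu_{\max}(E) \ge \mu(\nu_{*}G) = \mu(G) + \beta = \mu_{\max}(\nu^{!}E) + \beta.$$

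I do not expect real obstacles here: the only subtleties are the exactness properties ($\nu^{!}$ left exact, $\nu_{*}$ exact under a finite map) and the preservation of rank, which holds because $\nu$ is birational. The whole proof should be a short, purely formal consequence of Corollary \ref{useful}, the compatibility of slopes under $\nu_{*}$, and the adjoint relationship between $\nu_{*}$ and $\nu^{!}$.
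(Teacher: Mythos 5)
Your proposal is correct and follows essentially the same route as the paper: the upper bound comes from applying Corollary \ref{useful} to a (maximal destabilizing) subsheaf $F\subset E$ and using $\nu^{!}F\subset\nu^{!}E$, and the lower bound from pushing forward a subsheaf $G\subset\nu^{!}E$ via $\nu_*G\subset\nu_*(\nu^{!}E)\subset E$ together with the identity $\mu(\nu_*G)=\mu(G)+\beta$. The only cosmetic difference is that the paper quantifies over all subsheaves rather than picking the one realizing $\mu_{\max}$.
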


\begin{proof}
If $G\subset E$ is a subsheaf of $E$ then $\nu^{!}G\subset
\nu^{!}E$ and hence
$$\mu (G)\le \mu (\nu^{!}G)+\alpha+
\beta\le \mu _{\max}(\nu^{!}E)+\alpha+ \beta.$$ This proves that
$$\mu _{\max}(E)\le  \mu _{\max}(\nu^{!}E)+\alpha+
\beta.$$ Now if $F\subset \nu^{!}E$ then $\nu_*F\subset
\nu_*(\nu^{!}E)\subset E$. Therefore
$$\mu (F)=\mu (\nu_*F)-\beta\le \mu _{\max}(E)-\beta,$$ which implies that
$$\mu _{\max}(\nu ^{!}E)\le \mu _{\max}(E)-\beta .$$
\end{proof}

\medskip
 For a torsion free sheaf $E$ on $X$ we set $\nu
^{\sharp} E=\nu ^*E/\Tors$. Then $\nu_*{\nu^{\sharp} E}=
(\nu_*{\nu^*E})/\Tors$.

Note that $\nu^{!}$ is an equivalence of categories of sheaves on
$X$ and $\ti X$ whereas $\nu ^{\sharp}$ has much worse properties.
But  $\nu ^{\sharp}$ has the following important property: since
$\nu^*(E_1\otimes E_2)=\nu^*E_1\otimes \nu^*E_2$ we have
$\nu^{\sharp}(E_1\hot E_2)=\nu^{\sharp}E_1\hot \nu^{\sharp}E_2$.

\medskip

Let $\C= {\rm Ann} (\nu_* \O_{\ti X}/\O_X)\subset \O_X$ and $\C
_{\ti X}=\C \cdot \O_{\ti X}\subset \O_{\ti X}$ denote conductor
ideals of the normalisation.

\begin{Lemma}\label{mu-mu2}
For any torsion free sheaf $E$ on $X$ we have
$$  \mu (\nu^{\sharp} E) \le \mu (\nu^!{ E})- \mu (\C_{\ti X}).$$
\end{Lemma}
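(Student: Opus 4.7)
The plan is to construct an injective $\O_{\ti X}$-linear morphism
$$\phi \colon \C_{\ti X} \hot \nu^{\sharp} E \hookrightarrow \nu^! E$$
on the \emph{normal} variety $\ti X$. Granting this, the lemma follows at once: Lemma \ref{normal-tensor-product} applied on $\ti X$ gives $\mu(\C_{\ti X} \hot \nu^{\sharp} E) = \mu(\C_{\ti X}) + \mu(\nu^{\sharp} E)$, and since $\phi$ is an inclusion of torsion free sheaves of the same rank $r = \rk E$, we have $\mu(\C_{\ti X} \hot \nu^{\sharp} E) \le \mu(\nu^! E)$ (the quotient has generic rank zero, so its coefficient $\alpha_{d-1}$ is non-negative).

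For the construction, the relevant property of the conductor is that $\C_{\ti X}$ is the largest ideal of $\O_{\ti X}$ which is contained in $\O_X$; in particular $\C_{\ti X}\cdot\nu_*\O_{\ti X}\subset \O_X$. Working locally with $A\to \ti A$ and $\C\subset A$, I would set
$$\C_{\ti X}\otimes_{\ti A}\bigl(\ti A\otimes_A E\bigr)\longrightarrow \Hom_A(\ti A, E),\qquad c\otimes \tilde a\otimes e\longmapsto \bigl(b\mapsto (c\tilde a b)\cdot e\bigr),$$
using that $c\tilde a b\in \C_{\ti X}\subset A$, so multiplication with $e\in E$ makes sense. A direct check gives $\ti A$-linearity, and these formulas glue to an $\O_{\ti X}$-linear map $\C_{\ti X}\otimes_{\O_{\ti X}}\nu^* E\to \nu^! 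E$. Since $\nu_*\nu^!E\subset E$ and $\nu$ is affine, $\nu^!E$ is torsion free, so $\phi$ factors through the quotient $\C_{\ti X}\hot \nu^{\sharp}E$.

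Injectivity of $\phi$ is obtained by a generic argument: on the open locus of $\ti X$ where $\nu$ is an isomorphism one has $\C_{\ti X}=\O_{\ti X}$, and $\phi$ reduces to the canonical identification $\O_{\ti X}\otimes_{\O_{\ti X}}\nu^{\sharp}E\to \Hom_{\O_{\ti X}}(\O_{\ti X},\nu^{\sharp}E)\simeq \nu^{\sharp}E = \nu^!E$. This locus is dense in $\ti X$, so $\phi$ is generically an isomorphism; a generically injective map of torsion free sheaves is automatically injective. The only genuine technical point is verifying that the heuristic formula above really is well-defined and $\ti A$-linear, which hinges on the non-obvious identity $\C=\C_{\ti X}$ as subsheaves of $\nu_*\O_{\ti X}$ (so that every element of $\C_{\ti X}$ acts on all of $\nu_*\O_{\ti X}$ with values in $\O_X$); once this is pinned down, the remainder is formal.
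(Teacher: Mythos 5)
Your proposal is correct and follows essentially the same route as the paper: both proofs reduce the lemma to the inclusion $\C_{\ti X}\hot\nu^{\sharp}E\hookrightarrow\nu^{!}E$ (which the paper obtains from $\C=\Hom_{\O_X}(\nu_*\O_{\ti X},\O_X)$, composition of homomorphisms and adjunction, rather than your explicit local formula), note that it is generically an isomorphism, and then apply Lemma \ref{normal-tensor-product} on the normal variety $\ti X$.
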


\begin{proof}
Note that $\C =\Hom _{\O_X} (\nu _*{\O_{\ti X}}, \O _X)$.
Therefore for any coherent $\O_X$-module $E$ we have a canonical
map
$$\C \otimes E= \Hom _{\O_X} (\nu _*{\O_{\ti X}}, \O _X) \otimes
\Hom (\O_X, E) \to \Hom _{\O_X} (\nu _*{\O_{\ti X}}, E)=\nu
_*(\nu^{!}E)$$ given by composition of homomorphisms.
 Since $\nu ^*$ and $\nu _*$ are adjoint functors
this map induces
$$\nu ^*\C \otimes \nu ^* E \to \nu^{!}E.$$

Since $E$ is torsion free and $\C _{\ti X}= \nu^{\sharp} \C$ we
get
$$\C_{\ti X}\hot \nu^{\sharp} E \simeq \C_{\ti X}\cdot \nu^{\sharp} E\hookrightarrow \nu^{!}E.$$
Since this inclusion is an isomorphism at the generic point of $\ti X$ we have the following inequality
$$\mu (\C_{\ti X}\hot \nu^{\sharp} E) \le \mu (\nu^{!}E).$$
Now  Lemma \ref{normal-tensor-product} gives
$$\mu (\C_{\ti X}\hot \nu^{\sharp} E)= \mu (\nu^{\sharp} E)+\mu (\C_{\ti X}),$$
which implies the required inequality.
\end{proof}

\begin{Cor}
For any rank $r$ torsion free sheaf $E$ on $X$ we have
$$-\beta\le \mu (\nu ^{\sharp}E)-\mu (E) \le -\beta- \mu (\C_{\ti X}),$$ where the slopes are computed
with respect to $\O_X(1)$ on $X$ and $\nu^*\O_X(1)$ on $\ti X$.
\end{Cor}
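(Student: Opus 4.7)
The plan is to derive the upper bound by chaining Lemma \ref{mu-mu2} with Corollary \ref{useful}, and to obtain the lower bound by constructing a natural inclusion $E\hookrightarrow \nu_*(\nu^{\sharp}E)$ with torsion cokernel.

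For the upper bound, I first apply Lemma \ref{mu-mu2} to get $\mu(\nu^{\sharp}E)\le \mu(\nu^{!}E)-\mu(\C_{\ti X})$, and then combine with the lower inequality of Corollary \ref{useful}, namely $\mu(\nu^{!}E)\le \mu(E)-\beta$. Chaining these immediately yields $\mu(\nu^{\sharp}E)-\mu(E)\le -\beta-\mu(\C_{\ti X})$, which is the right-hand inequality.

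For the lower bound, the strategy is to analyse the unit of adjunction $\eta:E\to \nu_*\nu^*E$. Its kernel is supported off the locus where $\nu$ is an isomorphism, hence is torsion on $X$; since $E$ is torsion free, $\eta$ is injective. Postcomposing with the quotient $\nu_*\nu^*E\to \nu_*(\nu^{\sharp}E)=(\nu_*\nu^*E)/\Tors$, the kernel of the composite equals $\eta(E)\cap \Tors(\nu_*\nu^*E)$, a torsion subsheaf of the torsion free sheaf $\eta(E)\simeq E$, and so vanishes. The resulting injection $E\hookrightarrow \nu_*(\nu^{\sharp}E)$ is an isomorphism at the generic point, so its cokernel has dimension at most $d-1$; thus $\alpha_{d-1}(E)\le \alpha_{d-1}(\nu_*\nu^{\sharp}E)$. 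Dividing by the common rank $r$ and using the identity $\mu(\nu_*F)=\mu(F)+\beta$ derived in the proof of Corollary \ref{useful}, one gets $\mu(E)\le \mu(\nu^{\sharp}E)+\beta$, i.e.\ $\mu(\nu^{\sharp}E)-\mu(E)\ge -\beta$.

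The delicate point is the injectivity of the composite $E\to \nu_*(\nu^{\sharp}E)$: one needs to know that the torsion killed in passing from $\nu^*E$ to $\nu^{\sharp}E$ remains torsion after applying $\nu_*$, which holds because $\nu$ is finite and $\ti X$ is integral. Once this is in place, the argument is a mechanical assembly of Lemma \ref{mu-mu2}, Corollary \ref{useful}, and the short exact sequence $0\to E\to \nu_*(\nu^{\sharp}E)\to Q\to 0$ with $\dim Q\le d-1$.
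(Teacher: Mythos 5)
Your proposal is correct and follows the paper's own argument: the upper bound by chaining Lemma \ref{mu-mu2} with the left-hand inequality of Corollary \ref{useful}, and the lower bound via the injection $E\hookrightarrow \nu_*(\nu^{\sharp}E)$ coming from the unit $E\to\nu_*\nu^*E$ together with $\mu(\nu_*F)=\mu(F)+\beta$. You merely spell out in more detail the injectivity of the composite and the finiteness of $\nu$, which the paper leaves implicit.
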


\begin{proof}
The canonical map $E\to \nu_*(\nu^* E)$ leads to the inclusion
$$E\hookrightarrow \nu _*(\nu ^{\sharp}E).$$
This gives
$$\mu(E)\le \mu( \nu _*(\nu ^{\sharp}E))=\mu (\nu ^{\sharp}E)+\beta ,$$
where the last equality follows from proof of Lemma
\ref{useful}. This bounds the difference $\mu (\nu ^{\sharp}E)-\mu
(E)$ from below. To get the bound from the above it is sufficient
to use Lemma \ref{mu-mu2} and Corollary \ref{useful}.
\end{proof}

\medskip

\begin{Remark}
By Lemma \ref{mu-mu2} and the above corollary we have
$$ \mu (\nu^!{ E})\ge \mu (\nu^{\sharp} E)+ \mu (\C _{\ti X})\ge \mu (E)  -\beta+ \mu (\C_{\ti X}).$$
This allows  to take in Lemma \ref{mu-mu} $\alpha= -\mu (\C_{\ti
X})$. The proof of Lemma \ref{mu-mu} also gives a related and
explicit bound on $\alpha$.
\end{Remark}

\medskip

The above corollary can be used to prove  the following corollary:

\begin{Cor}
For any rank $r$ torsion free sheaf $E$ on $X$ we have
$$-\beta \le \mu _{\max}(\nu ^{\sharp}E)-\mu _{\max}(E)\le -\beta -\mu (\C_{\ti X}) .$$
\end{Cor}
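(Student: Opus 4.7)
The plan is to derive the two bounds separately by testing against subsheaves on each side and combining the preceding corollary with the inclusion $\C_{\ti X}\hot \nu^{\sharp}E\hookrightarrow \nu^!E$ exploited in the proof of Lemma~\ref{mu-mu2}. For the lower bound I would take a saturated subsheaf $G\subset E$ whose slope approximates $\mu_{\max}(E)$, and produce an embedding $\nu^{\sharp}G\hookrightarrow \nu^{\sharp}E$ by pulling back $G\hookrightarrow E$, passing to $\nu^*$, and killing torsion in the target. The preceding corollary applied to $G$ gives $\mu(\nu^{\sharp}G)\ge \mu(G)-\beta$, hence $\mu_{\max}(\nu^{\sharp}E)\ge \mu(\nu^{\sharp}G)\ge \mu(G)-\beta$, and taking the supremum over $G$ yields $\mu_{\max}(\nu^{\sharp}E)\ge \mu_{\max}(E)-\beta$.

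For the upper bound I would take an arbitrary subsheaf $F\subset \nu^{\sharp}E$ and tensor the inclusion $F\hookrightarrow \nu^{\sharp}E$ with $\C_{\ti X}$ modulo torsion, obtaining
$$\C_{\ti X}\hot F\hookrightarrow \C_{\ti X}\hot \nu^{\sharp}E\hookrightarrow \nu^!E,$$
where the second map is the inclusion established in the proof of Lemma~\ref{mu-mu2}. Corollary~\ref{useful2} then gives $\mu(\C_{\ti X}\hot F)\le \mu_{\max}(\nu^!E)\le \mu_{\max}(E)-\beta$, while Lemma~\ref{normal-tensor-product}, applied on the normal variety $\ti X$, identifies this slope with $\mu(\C_{\ti X})+\mu(F)$. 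Combining these yields $\mu(F)\le \mu_{\max}(E)-\beta-\mu(\C_{\ti X})$, and taking the supremum over $F$ produces the desired inequality $\mu_{\max}(\nu^{\sharp}E)\le \mu_{\max}(E)-\beta-\mu(\C_{\ti X})$.

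The technical heart of both steps is the injectivity of the induced maps $\nu^{\sharp}G\to \nu^{\sharp}E$ and $\C_{\ti X}\hot F\to \C_{\ti X}\hot \nu^{\sharp}E$, both of which fail to be formal consequences of right-exactness of the tensor product or pullback. Both reduce to the same principle: a map from a pure $d$-dimensional sheaf into a torsion free sheaf that is injective at the generic point must be injective globally, since the kernel is a subsheaf of a pure $d$-dimensional sheaf (hence itself pure of dimension $d$, or zero) that vanishes generically, and so must vanish. I expect this torsion bookkeeping under pullback and under $\hot$ to be the only subtlety; once it is in place, the argument is just a rearrangement of inequalities already proved in the section.
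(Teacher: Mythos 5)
Your proposal is correct and follows essentially the same route as the paper: the lower bound by testing subsheaves $G\subset E$ against the preceding corollary, and the upper bound by embedding $\C_{\ti X}\hot F$ into $\nu^{!}E$ via the inclusion from the proof of Lemma~\ref{mu-mu2} and then invoking Lemma~\ref{normal-tensor-product} and Corollary~\ref{useful2}. Your explicit justification of the injectivity of $\nu^{\sharp}G\to\nu^{\sharp}E$ and $\C_{\ti X}\hot F\to\C_{\ti X}\hot\nu^{\sharp}E$ (generic injectivity into a torsion free target forces global injectivity) is a correct filling-in of a point the paper leaves implicit.
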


\begin{proof}
If $G\subset E$ is a subsheaf of $E$ then $\nu^{\sharp}G\subset
\nu^{\sharp}E$ and hence
$$\mu (G)\le \mu (\nu^{\sharp}G)+
\beta\le \mu _{\max}(\nu^{\sharp}E)+\beta.$$
This proves that
$$\mu _{\max}(E)\le  \mu _{\max}(\nu^{\sharp}E)+
\beta.$$

Now if $F\subset \nu^{\sharp}E$ then by the proof of Lemma
\ref{mu-mu2} we have
$$\C_{\ti X}\hot F \subset \C_{\ti X}\hot \nu^{\sharp}
E\hookrightarrow \nu^{!}E.
$$
Together with Lemma \ref{normal-tensor-product} and Corollary \ref{useful2}, this gives
$$\mu (F)\le \mu _{\max}(\nu ^{!}E)-\mu (\C_{\ti X})\le
\mu _{\max}(E)-\beta-\mu (\C_{\ti X}),$$ which implies that
$$\mu _{\max}(\nu ^{\sharp}E)\le \mu _{\max}(E)-\beta -\mu (\C_{\ti X}).$$
\end{proof}

\medskip

Since $\nu^*(E_1\otimes E_2)=\nu^*E_1\otimes \nu^*E_2$ we have
$\nu^{\sharp}(E_1\hot E_2)=\nu^{\sharp}E_1\hot \nu^{\sharp}E_2$.
Therefore \cite[Introduction]{La3} or \cite[Lemma 3.2.1]{GLSS2}
imply the following proposition.

\begin{Prop}\label{mu_max-of-tensor-product}
There exists an explicit constant $\gamma$ (depending only on the
polarized  variety $(X, \O _X(1))$) such that for any two torsion free
sheaves $E_1$ and $E_2$ on $X$ of ranks $r_1, r_2$, respectively,
we have
$$\mu_{\max}(E_1\hot E_2)\le \mu_{\max}(E_1)
+\mu_{\max}(E_2)+(r_1+ r_2)\gamma.$$
\end{Prop}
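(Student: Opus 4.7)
The plan is to reduce the statement to the analogous bound on the normal variety $\ti X$ (where it is known by \cite{La3} or \cite{GLSS2}) and then transfer it back to $X$ using the previous corollary which controls how $\mu_{\max}$ changes under $\nu^{\sharp}$.

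First I would apply $\nu^{\sharp}$ to $E_1 \hot E_2$. The key compatibility, already observed in the text just before the proposition, is
$$\nu^{\sharp}(E_1 \hot E_2) = \nu^{\sharp} E_1 \hot \nu^{\sharp} E_2,$$
which follows from $\nu^*(E_1 \otimes E_2) = \nu^* E_1 \otimes \nu^* E_2$ after killing torsion. Combined with the lower half of the previous corollary, namely $\mu_{\max}(E) \le \mu_{\max}(\nu^{\sharp} E) + \beta$, we obtain
$$\mu_{\max}(E_1 \hot E_2) \le \mu_{\max}(\nu^{\sharp} E_1 \hot \nu^{\sharp} E_2) + \beta.$$

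Next, on the normal projective variety $(\ti X, \nu^*\O_X(1))$ we invoke the tensor product bound for $\mu_{\max}$ from \cite[Introduction]{La3} or \cite[Lemma 3.2.1]{GLSS2}: there exists a constant $\gamma'$ depending only on $(\ti X, \nu^*\O_X(1))$ such that for any torsion free sheaves $F_1, F_2$ on $\ti X$ of ranks $r_1, r_2$,
$$\mu_{\max}(F_1 \hot F_2) \le \mu_{\max}(F_1) + \mu_{\max}(F_2) + (r_1 + r_2)\gamma'.$$
Applying this to $F_i = \nu^{\sharp} E_i$ (which has the same rank $r_i$) yields
$$\mu_{\max}(\nu^{\sharp} E_1 \hot \nu^{\sharp} E_2) \le \mu_{\max}(\nu^{\sharp} E_1) + \mu_{\max}(\nu^{\sharp} E_2) + (r_1 + r_2)\gamma'.$$

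Finally I would use the upper half of the previous corollary, $\mu_{\max}(\nu^{\sharp} E_i) \le \mu_{\max}(E_i) - \beta - \mu(\C_{\ti X})$, to convert the right-hand side back to quantities on $X$. Combining all three inequalities gives
$$\mu_{\max}(E_1 \hot E_2) \le \mu_{\max}(E_1) + \mu_{\max}(E_2) + (r_1 + r_2)\gamma' - \beta - 2\mu(\C_{\ti X}).$$
Since $r_1 + r_2 \ge 2$ and $\beta, \mu(\C_{\ti X})$ depend only on $(X, \O_X(1))$, the additive constant $-\beta - 2\mu(\C_{\ti X})$ can be absorbed by taking $\gamma := \gamma' + C$ for a suitable constant $C$ depending only on $(X, \O_X(1))$. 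This produces the required bound and, since $\gamma'$ is explicit in \cite{La3}, so is $\gamma$.

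No step is really an obstacle since the deep input — the tensor product inequality on a normal variety — is quoted as a black box; the only thing to be careful about is the direction of the inequalities when passing between $X$ and $\ti X$ via $\nu^{\sharp}$, which is exactly what the preceding corollary was designed to handle.
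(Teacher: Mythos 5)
Your proof is correct and is exactly the argument the paper intends: the paper's own ``proof'' is the one-sentence remark preceding the proposition, which invokes $\nu^{\sharp}(E_1\hot E_2)=\nu^{\sharp}E_1\hot \nu^{\sharp}E_2$, the tensor-product bound on the normalization from \cite{La3} or \cite{GLSS2}, and (implicitly) the preceding corollary comparing $\mu_{\max}(E)$ with $\mu_{\max}(\nu^{\sharp}E)$. You have simply written out the chain of inequalities and the absorption of the additive constant using $r_1+r_2\ge 2$, with all directions handled correctly.
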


\section{Honest singular principal bundles}

In this section $X$ is a $d$-dimensional projective variety
defined over an algebraically closed field $k$ with a fixed ample
line bundle $\O_X(1)$.

The main aim of this section is proof of the following
generalization of \cite[Proposition 3.4]{Sch4}:

\begin{Prop}\label{honest}
Assume that $X$ is Gorenstein (i.e., a Cohen--Macaulay scheme with
invertible dualizing sheaf $\omega _X$) and there exists a
$G$-invariant non-degenerate quadratic form $\varphi$ on $V$. Then
every degree $0$ singular principal bundle is an honest singular
principal bundle.
\end{Prop}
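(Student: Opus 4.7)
The plan is to extract from $\tau$ and $\varphi$ a self-adjoint homomorphism $\Phi\colon\A\to\A^{\vee}$ whose isomorphism locus inside $U_{\A}$ coincides with the locus where $\sigma$ factors through ${\mathbb I}{\rm som}(V\otimes\O,\A^{\vee})/G$, and then use Serre duality on the Gorenstein variety $X$ together with $\deg\A=0$ to force $\Phi$ to be an isomorphism on all of $U_{\A}$. For the construction, non-degeneracy of $\varphi$ yields a $G$-invariant element $\varphi^{\vee}\in({\rm Sym}^2 V)^G$; tensoring with $\varphi^{\vee}$ gives an $\O_X$-linear inclusion ${\rm Sym}^2\A\hookrightarrow ({\rm Sym}^2\A)\otimes({\rm Sym}^2 V)^G\subset ({\rm Sym}^2(\A\otimes V))^G$, and composing with $\tau$ produces a morphism ${\rm Sym}^2\A\to\O_X$, equivalently a self-adjoint $\Phi\colon\A\to\A^{\vee}$. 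On $U_{\A}$, if $\sigma$ is locally represented by $\psi\colon\A\to V^{\vee}\otimes\O$, then $\Phi$ is the composition $\A\stackrel{\psi}{\to}V^{\vee}\to V\stackrel{\psi^{\vee}}{\to}\A^{\vee}$, where the middle arrow is the isomorphism induced by $\varphi$; hence $\Phi$ is an isomorphism at $x\in U_{\A}$ if and only if $\sigma(x)\in{\mathbb I}{\rm som}(V\otimes\O,\A^{\vee})/G$.

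The singular principal bundle hypothesis means $\psi$, and hence $\Phi$, is an isomorphism on some non-empty open $U\subset U_{\A}$. Torsion-freeness of $\A$ then gives a short exact sequence
$$0\to\A\stackrel{\Phi}{\to}\A^{\vee}\to\mathcal{K}\to 0,$$
so $\alpha_{d-1}(\mathcal{K})=\alpha_{d-1}(\A^{\vee})-\alpha_{d-1}(\A)$. The crucial intermediate step is the identity $\deg\A^{\vee}=-\deg\A$ for any torsion free $\A$ on a Gorenstein variety, which then forces $\alpha_{d-1}(\mathcal{K})=0$ under $\deg\A=0$, and hence ${\rm supp}\,\mathcal{K}$ to have codimension $\ge 2$ in $X$. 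This identity comes from the Serre--Grothendieck duality formula
$$P_{\A}(m)=(-1)^d\sum_{q\ge 0}(-1)^q P_{\Ext^q(\A,\omega_X)}(-m).$$
Since $\A$ is $S_1$, Lemma \ref{Ext-support}(2) forces each $\Ext^q(\A,\omega_X)$ with $q\ge 1$ to have support of codimension at least $q+1\ge 2$, contributing nothing to $\alpha_{d-1}$. Only $\Hom(\A,\omega_X)=\A^{\vee}\otimes\omega_X$ survives; using $\alpha_{d-1}(\omega_X)=-\alpha_{d-1}(\O_X)$ (obtained by applying the same formula to $\O_X$), a direct expansion gives $\alpha_{d-1}(\A)+\alpha_{d-1}(\A^{\vee})=2r\alpha_{d-1}(\O_X)$, equivalently $\deg\A^{\vee}=-\deg\A$.

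To finish, observe that on $U_{\A}$ both $\A$ and $\A^{\vee}$ are locally free of rank $r$, so $\det\Phi$ is a section of the line bundle $(\det\A)^{-2}|_{U_{\A}}$, non-zero since $\Phi$ is generically an isomorphism. Its zero locus is therefore either empty or pure of codimension $1$ in $U_{\A}$; but it equals ${\rm supp}\,\mathcal{K}\cap U_{\A}$, which has codimension $\ge 2$. Hence the zero locus is empty, $\Phi$ is an isomorphism on all of $U_{\A}$, and $(\A,\tau)$ is honest. The main obstacle in this argument is the degree identity $\deg\A^{\vee}=-\deg\A$: in the smooth case it is immediate from Riemann--Roch, but on a non-normal Gorenstein $X$ one has to control the higher $\Ext$ sheaves entering local duality, and this is precisely where the combination of the Gorenstein hypothesis (giving an invertible dualizing sheaf) and the $S_1$ property of torsion free sheaves (via Lemma \ref{Ext-support}) is decisive.
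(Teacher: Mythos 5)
Your argument is essentially the paper's: you build the self-adjoint map $\Phi\colon\A\to\A^{\vee}$ from $\varphi$ and $\tau$ exactly as in Schmitt's Proposition~3.4, and the crucial input is the identity $\deg\A^{\vee}=-\deg\A$ on a Gorenstein variety, which you prove the same way the paper's Lemma~\ref{degree} does (Serre duality plus Lemma~\ref{Ext-support}(2) applied with $k=1$ to kill the $\alpha_{d-1}$-contribution of $\Ext^q(\A,\omega_X)$ for $q\ge 1$). Two remarks. First, your phrase ``a direct expansion gives $\alpha_{d-1}(\A)+\alpha_{d-1}(\A^{\vee})=2r\alpha_{d-1}(\O_X)$'' quietly uses the twist formula $\deg(E\otimes L)=\deg E+r\,(L\cdot\O_X(1)^{d-1})$ to pass from $\alpha_{d-1}(\A^{\vee}\otimes\omega_X)$ to $\alpha_{d-1}(\A^{\vee})$; on a singular variety this is not a formal expansion but a statement that needs proof (the paper's Lemma~\ref{Hilb-poly}, via Koll\'ar's $K_i(X)$ filtration and Riemann--Roch for singular varieties), so you should cite or prove it rather than treat it as automatic. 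Second, your final step genuinely differs from the paper's: you observe that the degeneracy locus of $\Phi$ on $U_{\A}$ is the zero scheme of the section $\det\Phi$ of a line bundle, hence empty or of pure codimension $1$ by the Hauptidealsatz (using that $U_{\A}$ is integral so the section is a non-zerodivisor), whereas the paper invokes the rigidity of $S_2$-modules: two modules over the Cohen--Macaulay local ring $\O_{X,x}$ satisfying $S_2$ and coinciding in codimension $1$ are equal, applied at points where $\A$ (hence $\A^{\vee}$) is free. Both are correct; your determinant argument is more elementary and self-contained, while the $S_2$ argument does not use integrality and is the one that survives in more general purity settings. The proof is correct modulo the Lemma~\ref{Hilb-poly} citation.
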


\begin{proof}
Let $(\A ,\tau)$ be a degree $0$ singular principal bundle.
 As in the proof of \cite[Proposition 3.4]{Sch4} one
can easily show that there exists an injective map $\A \to
\A^{\vee}$ induced by the form $\varphi$. By Lemma \ref{degree} we
see that the Hilbert polynomials of $\A$  and $\A^{\vee}$ are the
same up to the terms of order $O(m^{d-2})$. Hence $\A \to
\A^{\vee}$ is an isomorphism in codimension $1$. Now let us recall
that for each $x\in X$ two finitely generated modules over a local
ring $\O_{X,x}$ satisfying $S_2$ that coincide in codimension $1$
are equal. In particular, at each point $x$ where $\A$ is locally
free the map $\A \to \A^{\vee}$ is an isomorphism. As in the proof
of \cite[Proposition 3.4]{Sch4} this implies that
$$\sigma (U_{\A})
\subset {\mathbb I}{\rm som} (V\otimes {\O}_{U_{\A}},
\A^{\vee}\mid_{U_{\A}}) /G.$$
\end{proof}

\medskip

The following lemma generalizes a well known equality from smooth
varieties to singular ones.

\begin{Lemma}\label{Hilb-poly}
For any rank $r$ coherent sheaf $E$ and a line bundle $L$ we have
$$\deg (E\otimes L)= \deg E + r\, (L\cdot \O_X (1)^{d-1}).$$
\end{Lemma}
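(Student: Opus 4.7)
The plan is to mimic the approach used in Lemma \ref{normal-tensor-product}: cut down by generic hyperplane sections to an irreducible curve and invoke Riemann--Roch there. The first step is to reduce to the case that $E$ is torsion-free. Writing $0 \to T \to E \to F \to 0$ with $T$ the torsion subsheaf of $E$, I have $r(F) = r$ and $r(T) = 0$, and both sides of the claimed identity are additive along this sequence. For $T$, whose support has dimension $\le d-1$, Snapper's theorem gives that $\chi(T \otimes L^n \otimes \O_X(m))$ has total degree $\le d-1$ in $(n,m)$, so the coefficient of $m^{d-1}$ is independent of $n$; hence $\alpha_{d-1}(T \otimes L) = \alpha_{d-1}(T)$, which matches the right-hand side $r(T)(L \cdot \O_X(1)^{d-1}) = 0$.

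For torsion-free $F$ I apply the formula of \cite[Lemma 1.2.1]{HL} --- already invoked in Lemma \ref{normal-tensor-product} --- simultaneously to $F$ and $F \otimes L$, using a common general choice of hyperplanes $H_1, \ldots, H_d$. For generic hyperplanes the $0$-dimensional set $H_1 \cap \cdots \cap H_d$ avoids the (codimension $\ge 1$) non-locally-free locus of $F$, so extracting the coefficient of $m^{d-1}$ from the resulting expansion yields the curve formula $\deg F = \chi(F|_C) - r\,\chi(\O_C)$ with $C := H_1 \cap \cdots \cap H_{d-1}$, and analogously for $F \otimes L$. Subtracting,
$$\deg(F \otimes L) - \deg F = \chi(F|_C \otimes L|_C) - \chi(F|_C).$$

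The right-hand side is computed by the Riemann--Roch identity on the integral projective curve $C$: for any coherent $G$ and line bundle $M$ on $C$ one has $\chi(G \otimes M) - \chi(G) = r(G)\,\deg_C M$, which follows from Snapper (linearity in the twist), additivity in $G$, and the computation on $\O_C$ and torsion sheaves. Applying this to $G = F|_C$ (of generic rank $r$, because the generic point of $C$ can be arranged to lie in the locally-free locus of $F$) and $M = L|_C$, together with the classical identification $\deg_C(L|_C) = (L \cdot \O_X(1)^{d-1})$, produces $r\,(L \cdot \O_X(1)^{d-1})$, completing the computation. The main obstacle is the genericity step: $C$ must be integral (by Bertini in characteristic zero) and its generic point must lie in the locally-free locus of $F$. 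Both conditions cut out non-empty open subsets of the parameter space of hyperplanes, so a suitably generic choice exists; once these are in place the rest is direct bookkeeping.
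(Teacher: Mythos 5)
Your proof is correct, but it takes a genuinely different route from the paper's. The paper stays on $X$ and works in the Grothendieck group: using Koll\'ar's dimension filtration $K_\bullet(X)$ it expands $L\otimes E(m)=\sum_i c_1(L)^i\cdot E(m)$, replaces $E$ by $r\,\O_X$ modulo $K_{d-1}(X)$, and reduces everything to computing $\chi(X,c_1(L)\cdot \O_X(m))=\chi(\O_X(m))-\chi(L^{-1}(m))$, which it evaluates with Fulton's Riemann--Roch theorem for singular varieties. You instead split off the torsion subsheaf (disposed of by the two-variable Snapper polynomial, exactly as you say), slice the torsion-free part down to a general complete-intersection curve $C$ via \cite[Lemma 1.2.1]{HL} --- the same device the paper uses in Lemma \ref{normal-tensor-product} --- and finish with Riemann--Roch on $C$; the bookkeeping with the coefficient of $m^{d-1}$ and the cancellation of the $0$-dimensional terms (using that $F$ is locally free at the points of $H_1\cap\dots\cap H_d$) checks out. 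What each approach buys: the paper's argument is slice-free, needs no Bertini-type genericity, and handles arbitrary coherent sheaves in one stroke, at the cost of invoking singular Riemann--Roch; yours is more elementary, replacing that input by the Snapper formalism on a curve, at the cost of the genericity step. Two minor remarks on that step: the paper states the lemma with no characteristic hypothesis (Section 5 is characteristic-free; only the application to the main theorem assumes characteristic zero), and the integrality of a general member of the very ample system $|\O_X(1)|$ on an integral variety of dimension $\ge 2$ holds over any algebraically closed field, so your appeal to ``Bertini in characteristic zero'' is an unnecessary restriction, though harmless here; and when $d=1$ the slicing is vacuous and your curve-level identity applies directly to $X$ itself.
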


\begin{proof}
We use the notation from  Koll\'ar's book \cite[Chapter VI.2]{Ko}.
In particular, $K_i(X)$ stands for the subgroup of the
Grothendieck group of $X$ generated by subsheaves supported in
dimension at most $i$. We have
$$L\otimes E(m)=\sum _{i=0}^d c_1(L)^i\cdot E(m)$$
(see, e.g., \cite[Chapter VI.2, Lemma 2.12]{Ko}). On the other
hand, by \cite[Chapter VI.2, Corollary 2.3]{Ko} we have
$$E\equiv r\, \O_X \mod K_{d-1}(X).$$
Note that
$$L\otimes E(m)=E(m)+ r \, c_1(L)\cdot \O_X (m)
+c_1(L)\cdot (E-r\, \O_X)(m)+ \sum _{i\ge 2} c_1(L)^i\cdot E(m)$$
and $c_1(L)\cdot (E-r\, \O_X)+ \sum _{i\ge 2} c_1(L)^i\cdot E \in
K_{d-2}(X)$ by \cite[Chapter VI.2, Proposition 2.5]{Ko}. Therefore
by \cite[Chapter VI.2, Corollary 2.13]{Ko} we have
$$\chi (X,L\otimes E(m))=\chi (X, E(m))+r\chi
(X,c_1(L)\cdot \O_X(m))+O(m^{d-2}).$$ By the
 Riemann--Roch theorem
for singular varieties (see \cite[Corollary 18.3.1]{Fu}) we have
$$\begin{array}{ccl}
\chi (X,c_1(L)\cdot \O_X(m))&=&\chi (X,\O_X(m))- \chi
(X,L^{-1}(m))\\
& =& \int _X (\ch (\O_X(m))-\ch (L^{-1}(m)))\Td X\\
&=& (L\cdot \O_X (1)^{d-1}) \frac{m^{d-1}}{(d-1)!}+O(m^{d-2})
\end{array}$$
which, together with the previous equality, implies the lemma.
\end{proof}

\begin{Lemma}\label{degree}
If $X$ is Gorenstein  and $E$ is a torsion free sheaf on $X$ then
$$\deg E^{\vee}= - \deg E.$$
\end{Lemma}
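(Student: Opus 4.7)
The plan is to extract the top two coefficients of the Hilbert polynomial of $E^{\vee}$ by combining Grothendieck--Serre duality on the Cohen--Macaulay scheme $X$, the codimension bounds on Ext-sheaves provided by Lemma \ref{Ext-support}, and the twisting formula from Lemma \ref{Hilb-poly}.

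First I would use the Gorenstein assumption (i.e.\ that $\omega_X$ is invertible) to rewrite $\Hom(E,\omega_X)=E^{\vee}\otimes\omega_X$, so that computing the leading coefficients of $P(E^{\vee})$ reduces, via Lemma \ref{Hilb-poly}, to computing those of $\Hom(E,\omega_X)$ up to the explicit correction $r\cdot(\omega_X\cdot\O_X(1)^{d-1})$. The key duality input is the identity
$$\sum_{q\ge 0}(-1)^q\chi(X,\Ext^q(E,\omega_X)(m))=(-1)^d\chi(X,E(-m)),$$
obtained from the local-to-global Ext spectral sequence combined with the Serre duality isomorphism $\ext^i(F,\omega_X)\cong H^{d-i}(X,F)^{*}$ for Cohen--Macaulay $X$ (applied to $F=E(-m)$).

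Next, because $E$ is torsion free it satisfies $S_1$, so by Lemma \ref{Ext-support}(2) the sheaves $\Ext^q(E,\omega_X)$ for $q\ge 1$ are supported in codimension at least $q+1\ge 2$ and hence contribute polynomials of degree at most $d-2$ in $m$. The identity therefore collapses modulo $O(m^{d-2})$ to
$$P(E^{\vee}\otimes\omega_X)(m)=(-1)^d P(E)(-m)+O(m^{d-2}),$$
and comparing the coefficients of $m^{d}$ and $m^{d-1}$ yields $\alpha_{d-1}(E^{\vee}\otimes\omega_X)=-\alpha_{d-1}(E)$. Applying Lemma \ref{Hilb-poly} then expresses $\alpha_{d-1}(E^{\vee})$ in terms of $\alpha_{d-1}(E)$ and the intersection number $r\cdot(\omega_X\cdot\O_X(1)^{d-1})$.

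Finally I would specialize the same computation to $E=\O_X$ (where $E^{\vee}\otimes\omega_X=\omega_X$) to obtain the auxiliary identity $(\omega_X\cdot\O_X(1)^{d-1})=-2\alpha_{d-1}(\O_X)$; plugging this back into the formula from the previous step and using $\rk E^{\vee}=r$ precisely cancels the twist correction and delivers $\deg E^{\vee}=-\deg E$. The main subtlety I anticipate is the second step: verifying that the spectral-sequence/Serre-duality Euler-characteristic identity is correctly set up on a not-necessarily-smooth Cohen--Macaulay base, and that Lemma \ref{Ext-support} really does suffice to push all higher Ext contributions into the $O(m^{d-2})$ remainder, so that the conclusion is genuinely about the two top coefficients and not about the whole Hilbert polynomial.
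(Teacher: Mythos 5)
Your proposal is correct and follows essentially the same route as the paper: Serre duality plus the local-to-global Ext spectral sequence to get $P(E^{\vee}\otimes\omega_X)(m)=(-1)^dP(E)(-m)+O(m^{d-2})$, Lemma \ref{Ext-support} to discard the higher Ext-sheaves, Lemma \ref{Hilb-poly} to untwist by $\omega_X$, and the specialization to $E=\O_X$ to eliminate the correction term. Your explicit appeal to Lemma \ref{Ext-support}(2) with the $S_1$ property of a torsion free sheaf to handle the $q=1$ term is exactly the point that makes the $O(m^{d-2})$ estimate work.
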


\begin{proof}
Since $X$ is Cohen--Macaulay Serre's duality gives the equality
$$\chi (X, E)=(-1)^d \sum _{i=0}^d (-1)^i \dim \ext ^i(E, \omega_X).$$
The local to global Ext spectral sequence
$$  H^p (X,\Ext ^q (E,\omega _X))\Rightarrow \ext ^{p+q}(E, \omega _X)$$
implies that
$$\begin{array}{ccl}
\sum _{i=0}^d (-1)^i \dim \ext ^i(E, \omega_X) &=& \sum _{0\le p,q\le d} 
(-1)^{p+q}\dim H^p (X,\Ext ^q (E,\omega _X))\\
&=&
 \sum _{q=0}^d
(-1)^q \chi(X, \Ext _X^q(E, \omega_X)).
\end{array}
$$ Therefore we obtain
$$\chi(X, E(m))= (-1)^{d}  \sum _{q=0}^d (-1)^q
\chi(X, \Ext _X^q(E, \omega_X)\otimes \O_X(-m)).$$ By Lemma
\ref{Ext-support} we have $\dim \Ext _{X}^q(E,\omega_X)\le d-2$
for  $q> 0$, so by \cite[Chapter VI, Corollary 2.14]{Ko}
$$\chi(X, \Ext _X^q(E, \omega_X)\otimes \O_X(-m))=O (m^{d-2})$$
for $q>0$. Since $\omega_X$ is invertible $\Hom (E,\omega
_X)=E^{\vee}\otimes \omega_X$ and we get
$$\chi(X, E(m))= (-1)^{d} \chi(X, E^{\vee}\otimes \omega _X(-m))+O(m^{d-2}).$$
In particular, we have
$$\alpha_{d-1}(E^{\vee}\otimes \omega _X)=-\alpha_{d-1}(E).$$
Therefore by Lemma \ref{Hilb-poly}
$$
\begin{array}{ccl}
\deg E^{\vee}&=&\deg (E^{\vee}\otimes \omega _X)-r\, c_1(\omega
_X)\cdot c_1(\O_X (1))^{d-1}\\
&=&\alpha_{d-1}(E^{\vee}\otimes \omega _X)-r
\alpha_{d-1}(\O_X)-r\, c_1(\omega _X)\cdot c_1(\O_X (1))^{d-1}\\
&=& -\deg E-2r \alpha_{d-1}(\O_X)-r\, c_1(\omega _X)\cdot c_1(\O_X
(1))^{d-1}.
\end{array}
$$
Applying this equality for $E=\O_X$ we see that $$-2
\alpha_{d-1}(\O_X)-\, c_1(\omega _X)\cdot c_1(\O_X (1))^{d-1}=0,$$
so $\deg E^{\vee}= - \deg E$.
\end{proof}

\section{Semistable reduction for singular
principal $G$-bundles}

The following global boundedness of swamps on singular varieties
can be proven in the same way as in the case of smooth varieties
(see \cite[Theorem 4.2.1]{GLSS1}, \cite[Theorem 3.2.2]{GLSS2} or
\cite[Theorem 2.3.4.3]{Sch5}). The only difference is that we need
Proposition \ref{mu_max-of-tensor-product} (instead of, e.g.,
\cite[Lemma 3.2.1]{GLSS2}).

\begin{Thm}
Let us fix a polynomial $P$, integers $a$, $b$ and a class $l$ in
the N\'eron--Severi group of $X$. Then the set of isomorphism
classes of torsion free sheaves $\A$ on $X$ with Hilbert
polynomial $P$ and such that there exists a positive rational
number $\overline{\delta}$ and a slope
$\overline{\delta}$-semistable $\rho_{a,b}$-swamp $(\A, L,
\varphi)$ with $L$ of class $l$ is bounded.
\end{Thm}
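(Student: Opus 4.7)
The strategy is to follow the standard argument for boundedness of semistable swamps on smooth projective varieties (\cite[Theorem~4.2.1]{GLSS1}, \cite[Theorem~3.2.2]{GLSS2}, \cite[Theorem~2.3.4.3]{Sch5}), substituting Proposition~\ref{mu_max-of-tensor-product} for the analogous $\mu_{\max}$-bound on tensor products that was available on smooth varieties (e.g.\ \cite[Lemma~3.2.1]{GLSS2}). Since the substitute statement has the same form — an inequality $\mu_{\max}(E_1\hot E_2)\le\mu_{\max}(E_1)+\mu_{\max}(E_2)+(r_1+r_2)\gamma$ with an additive error that depends only on $(X,\O_X(1))$ and the ranks — all subsequent combinatorial manipulations carry over with only the constants changed.

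By the Kleiman--Grothendieck--Maruyama boundedness criterion, a family of torsion-free sheaves on $(X,\O_X(1))$ with fixed Hilbert polynomial $P$ is bounded if and only if $\mu_{\max}$ is uniformly bounded from above. It therefore suffices to produce a constant $C=C(P,a,b,l,X,\O_X(1))$ with $\mu_{\max}(\A)\le C$ whenever $\A$ arises in a slope $\overline{\delta}$-semistable swamp $(\A,L,\varphi)$ with $L$ of class $l$ for some $\overline{\delta}>0$. A first useful piece of information comes from the existence of $\varphi$ alone: since $L$ is torsion-free, $\varphi$ factors as a non-zero map $\A^{\hot a,\oplus b}\to L$ whose image is a rank-$1$ subsheaf of $L$, and so has slope $\le \mu(L)$; as it is also a quotient of $\A^{\hot a,\oplus b}$, one gets $\mu_{\min}(\A^{\hot a})\le \mu(L)$. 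Iterating Proposition~\ref{mu_max-of-tensor-product} (together with the parallel $\mu_{\min}$-inequality, which follows by the same normalisation argument once one recalls that $\nu^{\sharp}$ commutes with $\hot$ and that on the normal variety $\widetilde{X}$ the tensor product of semistable sheaves is semistable in characteristic zero) yields a bound of the shape $\mu_{\min}(\A^{\hot a})\ge a\mu_{\min}(\A)-C_1$, whence $\mu_{\min}(\A)$ is uniformly bounded from above.

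To upgrade this to a bound on $\mu_{\max}(\A)$ one applies slope $\overline{\delta}$-semistability to the Harder--Narasimhan filtration of $\A$ (or, more simply, to the one-step weighted filtration $0\subset F\subset\A$ with $\alpha_1=1$ and $F=\A^1$ the maximal destabilising subsheaf). The linear term $L(\A_\bullet,\alpha_\bullet)$ reads $rr_1(\mu(\A)-\mu_{\max}(\A))$, while the quantity $\mu(\A_\bullet,\alpha_\bullet;\varphi)$ is analysed by a case distinction according to which subsheaves of the form $\A_{i_1}\otimes\cdots\otimes\A_{i_a}$ support a non-zero restriction of $\varphi$; the estimates on these restrictions feed back through iterated applications of Proposition~\ref{mu_max-of-tensor-product}. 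The resulting inequalities, combined with the $\mu_{\min}$-bound from the previous step and the constraint that $\mu(\A)$ is fixed by $P$, force $\mu_{\max}(\A)$ into a bounded range.

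The main obstacle, and indeed the whole raison d'\^etre of Section~4, is that on a non-normal variety the tensor product of semistable torsion-free sheaves need not be semistable, so the $\mu_{\max}$-bound on tensor products used in the smooth case (valid there with zero error term when all sheaves are semistable) had no published analogue in the singular setting. Proposition~\ref{mu_max-of-tensor-product} overcomes this by providing an inequality with the same structure but an additive error $(r_1+r_2)\gamma$ depending only on $(X,\O_X(1))$ and the ranks; since this error is uniform in $\A$ and $L$, it contributes only to the constants in the final bound and the smooth-case argument imports with essentially cosmetic changes.
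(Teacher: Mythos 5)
Your proposal matches the paper's own treatment exactly: the paper proves this theorem simply by citing the smooth-case arguments of \cite[Theorem 4.2.1]{GLSS1}, \cite[Theorem 3.2.2]{GLSS2} and \cite[Theorem 2.3.4.3]{Sch5} and noting that the only change needed is to replace the tensor-product estimate \cite[Lemma 3.2.1]{GLSS2} by Proposition \ref{mu_max-of-tensor-product}, which is precisely your strategy. Your reconstruction of the imported argument (bounding $\mu_{\min}$ via the image of $\varphi$, invoking the $\mu_{\min}$-analogue of Proposition \ref{mu_max-of-tensor-product}, and then feeding the Harder--Narasimhan filtration into the slope-semistability inequality) is consistent with the cited proofs; the paper itself supplies no further detail.
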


This boundedness result implies the following semistable reduction
theorem (see \cite[Theorem 5.4.4]{GLSS1}, \cite[Theorem
4.4.1]{GLSS2} or \cite[Theorem 2.4.4.1]{Sch5}). We skip the proof
as it is the same as in the smooth case.

\begin{Thm}
Assume that $k$ has characteristic zero. Then there exists a
polynomial $\delta _{\infty}$ such that for every positive
polynomial $\delta>\delta_{\infty}$ every $\delta$-semistable
pseudo $G$-bundle $(\A, \tau)$ is a singular principal $G$-bundle.
\end{Thm}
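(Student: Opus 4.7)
The plan is to argue by contradiction: suppose that for arbitrarily large positive polynomials $\delta$ there exists a $\delta$-semistable pseudo $G$-bundle $(\A, \tau)$ on $X$ which fails to be a singular principal $G$-bundle. The strategy is to extract from such a hypothetical counterexample a weighted filtration that violates the $\delta$-semistability inequality once $\delta$ is chosen large enough.

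First I would reduce to a bounded family of sheaves $\A$. Since the ring of invariants $\mathrm{Sym}^*(\A\otimes V)^G$ is finitely generated (this uses characteristic zero) and $\tau$ is non-trivial, the map $\tau$ determines a non-zero homogeneous component $\varphi:(\A^{\otimes a})^{\oplus b}\to \O_X$ for some $(a,b)$ depending only on $\rho$. Thus $(\A, \O_X, \varphi)$ is a non-trivial $\rho_{a,b}$-swamp, and its slope $\overline{\delta}$-semistability (for the associated parameter $\overline{\delta}$) is an immediate consequence of the $\delta$-semistability of $(\A, \tau)$. Applying the global boundedness theorem stated above then confines $\A$ to a bounded family. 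In particular, one obtains a uniform upper bound on $\mu_{\max}(\A)$, and hence on the polynomial $M(\A_\bullet, \alpha_\bullet)$ for weighted filtrations of $\A$ with bounded weights and ranks.

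Next I would apply the Hilbert--Mumford criterion over the function field $K(X)$. The failure to be a singular principal bundle says precisely that the image of $\sigma$ at the generic point $\eta\in X$ lies in the closed subscheme of $\mathrm{Hom}(V, \A^\vee_\eta)/\!/G$ complementary to the open locus $\mathrm{Isom}(V, \A^\vee_\eta)/G$. In characteristic zero, Kempf's instability theorem produces a canonical conjugacy class of one-parameter subgroups $\lambda$ of ${\rm SL}(\A\otimes K(X))$ destabilising $\sigma(\eta)$; this gives a weighted filtration $(\A'_\bullet, \alpha'_\bullet)$ of the generic fibre together with a uniform bound $\mu(\A'_\bullet, \alpha'_\bullet, \tau|_\eta)\le -c$, where $c>0$ depends only on the representation $\rho$ (the possible destabilising numerical invariants form a discrete set bounded away from zero on the unstable locus). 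Saturating each $\A'_i$ inside $\A$ extends this to a weighted filtration $(\A_\bullet, \alpha_\bullet)$ of $\A$ with the same value of $\mu(\A_\bullet, \alpha_\bullet, \tau)$. Combining the two ingredients gives, for this weighted filtration,
$$M(\A_\bullet, \alpha_\bullet)+\delta\cdot \mu(\A_\bullet, \alpha_\bullet, \tau)\le C-c\cdot\delta,$$
where $C$ is independent of $(\A, \tau)$; choosing $\delta_\infty$ to be any positive polynomial of degree $\le d-1$ dominating $C/c$ makes this negative for $\delta>\delta_\infty$, contradicting $\delta$-semistability.

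The main obstacle, and the reason the smooth-case proofs of \cite{GLSS1} and \cite{GLSS2} cannot be transcribed verbatim, is the uniform boundedness step: for smooth $X$ it rests on a Bogomolov-type inequality for the maximal slope of tensor products of semistable sheaves, which is not directly available over a non-normal variety. This is precisely what Proposition \ref{mu_max-of-tensor-product} above supplies, via a passage through the normalisation controlled by Corollary \ref{useful} and Lemma \ref{mu-mu2}. Once this substitution is made, the Hilbert--Mumford step, the saturation of the generic filtration, and the final polynomial comparison all go through exactly as in \cite[Thm.~4.2.1]{GLSS1}, \cite[Thm.~4.4.1]{GLSS2} or \cite[2.4.4.1]{Sch5}. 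Characteristic zero is used to guarantee finite generation of the invariant ring (so that the reduction to a swamp makes sense), to have the Reynolds operator, and to apply Kempf's instability theorem in its standard form.
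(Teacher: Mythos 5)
Your proposal takes essentially the same route as the paper: the paper also deduces the theorem from the global boundedness result for swamps (whose only new ingredient on singular varieties is Proposition \ref{mu_max-of-tensor-product}) and then declares the remaining steps --- reduction to an associated $\rho_{a,b}$-swamp, the Kempf/Hilbert--Mumford instability argument at the generic point, saturation, and the polynomial comparison --- identical to the smooth case of \cite{GLSS1}, \cite{GLSS2} and \cite{Sch5}. Your expanded sketch of those deferred steps is consistent with the cited sources, so nothing essential is missing.
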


Let us recall that a singular principal $G$-bundle is semistable
if and only if the associated pseudo $G$-bundle is
$\delta$-semistable for $\delta>\delta_{\infty}$ (see
\cite[Theorem 5.4.1]{GLSS1}). Therefore the above semistable
reduction theorem and Theorem \ref{Bhosle-Schmitt} imply the
following corollary.

\begin{Cor}
Assume that $k$ has characteristic zero and let us fix a
polynomial $P$. Then there exists a projective moduli space
$M^{\rho}_{X,P}$ for
semistable principal $G$-bundles $(\A , \tau)$ on $X$ such that
$\A$ has Hilbert polynomial $P$.
\end{Cor}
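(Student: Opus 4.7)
The plan is to derive this Corollary as a direct synthesis of the three preceding results in this section, with essentially no new work beyond bookkeeping. First I would choose any positive polynomial $\delta>\delta_\infty$, where $\delta_\infty$ is the polynomial produced by the semistable reduction theorem stated immediately above. For this $\delta$, Theorem \ref{Bhosle-Schmitt} gives a projective moduli space $M^{\rho,\delta}_{X,P}$ parameterizing $\delta$-semistable pseudo $G$-bundles $(\A,\tau)$ whose underlying torsion-free sheaf $\A$ has Hilbert polynomial $P$. The goal is then to show that this scheme already solves the moduli problem for semistable singular principal $G$-bundles, so that we may simply set $M^{\rho}_{X,P}:=M^{\rho,\delta}_{X,P}$.

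To identify the two moduli problems, I would invoke the two results stated after Theorem \ref{Bhosle-Schmitt}. On the one hand, the semistable reduction theorem guarantees that since $\delta>\delta_\infty$, every $\delta$-semistable pseudo $G$-bundle $(\A,\tau)$ is actually a singular principal $G$-bundle. This pins down the geometric points of $M^{\rho,\delta}_{X,P}$: they are (S-equivalence classes of) singular principal $G$-bundles rather than arbitrary pseudo $G$-bundles. On the other hand, the result cited as \cite[Theorem 5.4.1]{GLSS1} provides the converse comparison of stability conditions: a singular principal $G$-bundle is semistable in the sense of Section 2.4 if and only if the associated pseudo $G$-bundle is $\delta$-semistable for all $\delta>\delta_\infty$. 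Together these two facts give a bijection between semistable singular principal $G$-bundles and $\delta$-semistable pseudo $G$-bundles at the level of families.

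Combining everything, for the chosen $\delta$ the moduli functor of semistable singular principal $G$-bundles with $\A$ of Hilbert polynomial $P$ agrees with the moduli functor of $\delta$-semistable pseudo $G$-bundles with the same numerical invariants; hence the latter's projective coarse moduli space $M^{\rho,\delta}_{X,P}$ serves as the desired $M^{\rho}_{X,P}$. There is no genuine obstacle in this Corollary: all the real work has been carried out in the boundedness result of Section 4 (which enters through the semistable reduction theorem) and in the construction of $M^{\rho,\delta}_{X,P}$ via Theorem \ref{Bhosle-Schmitt}. The only point to double-check is that the choice of $\delta$ does not affect the resulting scheme, which follows from the fact that $\delta$-semistability stabilizes for $\delta>\delta_\infty$ by the same equivalence cited above.
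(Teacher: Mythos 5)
Your proposal is correct and follows exactly the paper's argument: fix $\delta>\delta_\infty$, invoke Theorem \ref{Bhosle-Schmitt} for the projective moduli space of $\delta$-semistable pseudo $G$-bundles, use the semistable reduction theorem to see these are all singular principal $G$-bundles, and use \cite[Theorem 5.4.1]{GLSS1} to match the two semistability notions. No discrepancies to report.
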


\medskip

Now let us consider the relative case. Let $f:X\to S$ be a flat,
projective morphism of $k$-schemes of finite type with integral
geometric fibers. Assume that $k$ has characteristic zero and fix
a polynomial $P$.

\begin{Thm}
Let us fix a faithful representation $\rho\colon G \to {\rm
GL}(V)$ of the reductive algebraic group $G$.

\begin{enumerate}
\item There exists a projective moduli space $M^{\rho}_{X/S, P}\to S$ for $S$-flat families
of semistable singular principal $G$-bundles on $X\to S$ such that
for all $s\in S$ the restriction $\A |_{X_s}$ has Hilbert
polynomial $P$.
\item Let $P$ correspond to sheaves of degree $0$.
If the fibres of $f$ are Gorenstein and there exists a
$G$-invariant non-degenerate quadratic form $\varphi$ on $V$ then
$M^{\rho}_{X/S, P}\to S$ parameterizes only honest singular principal $G$-bundles.
\end{enumerate}
\end{Thm}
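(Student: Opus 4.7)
The plan is to bootstrap from the absolute case (the Corollary just above in Section 6) to the relative setting, paralleling the method used there.

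For part 1, I would first invoke the relative version of Theorem \ref{Bhosle-Schmitt}: for every positive polynomial $\delta$ of degree at most the relative dimension minus one, there is a projective moduli space $M^{\rho, \delta}_{X/S, P} \to S$ of $\delta$-semistable pseudo $G$-bundles $(\A, \tau)$ with $\A |_{X_s}$ of Hilbert polynomial $P$. This is a standard GIT construction using the relative Quot scheme, with Lemma \ref{Bhosle's-condition} applied fibrewise to verify Bhosle's condition on each $X_s$.

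Next, I would extend the semistable reduction theorem of Section 6 to the relative setting. The absolute statement rests on the boundedness theorem at the start of Section 6, which in turn invokes Proposition \ref{mu_max-of-tensor-product}, whose constant $\gamma$ depends only on the polarized variety. Since $S$ is of finite type (hence quasi-compact) and $f$ is flat, after a noetherian stratification of $S$ if needed, the numerical constants $\alpha, \beta, \gamma$ attached to the fibres $(X_s, \O_{X_s}(1))$ can be bounded uniformly in $s$. This produces a single polynomial $\delta_\infty$ which works simultaneously for every fibre. For any fixed $\delta > \delta_\infty$, each fibre of $M^{\rho, \delta}_{X/S, P}$ parameterizes only singular principal $G$-bundles by the absolute version, while conversely every semistable singular principal $G$-bundle yields a $\delta$-semistable pseudo $G$-bundle. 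Setting $M^\rho_{X/S, P} := M^{\rho, \delta}_{X/S, P}$ then establishes part 1.

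For part 2, I would apply Proposition \ref{honest} fibrewise: under the Gorenstein hypothesis on each $X_s$ and the existence of the $G$-invariant non-degenerate quadratic form on $V$, every degree $0$ singular principal $G$-bundle on $X_s$ is honest. Since $P$ is fixed and, by flatness of $f$, the number $\alpha_{d-1}(\O_{X_s})$ is locally constant in $s$, the condition $\deg (\A |_{X_s}) = 0$ is locally constant on $S$ and therefore cuts out a clopen, hence closed, $S$-subscheme $M^{\rho, h}_{X/S, P} \subset M^\rho_{X/S, P}$. By Proposition \ref{honest}, this subscheme parameterizes only honest singular principal $G$-bundles.

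The main obstacle, as expected, is the uniform version of the semistable reduction theorem over the base $S$. This reduces to showing that the constants $\alpha, \beta, \gamma$ of Section 4, and consequently $\delta_\infty$, can be chosen independently of $s \in S$; a technical but standard point once the fibrewise boundedness results are in hand.
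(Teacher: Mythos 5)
Your proposal follows essentially the same route as the paper, which simply declares that part 1 is the Corollary of Section 6 ``rewritten in the relative setting'' (via the relative version of Theorem \ref{Bhosle-Schmitt} and the semistable reduction theorem) and that part 2 is a direct consequence of Proposition \ref{honest}, skipping all details with a reference to \cite[Theorem 4.3.7]{HL}. You in fact supply more of the substance than the paper does --- notably the uniformity of the constants $\alpha,\beta,\gamma$ and hence of $\delta_\infty$ over the noetherian base, and the observation that the degree-$0$ condition is clopen because $\alpha_{d-1}(\O_{X_s})$ is locally constant by flatness --- and these are exactly the points the paper's appeal to the standard relative argument is meant to cover.
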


The first part of this theorem follows directly from the above
corollary (rewritten in the relative setting). The second part is
a direct consequence of Proposition \ref{honest}.
Since proof in the relative setting is essentially the same as usual
(cf. \cite[Theorem 4.3.7]{HL}) we skip the details.

\medskip

\section*{Acknowledgements}

The author would like to thank Alexander Schmitt for useful
conversations. He would also like to thank  the Alexander von
Humboldt Foundation for supporting, via the Bessel Research Award,
his visit to the University of Duisburg-Essen, where most
of this paper was written.
The author was partially supported by a Polish MNiSW grant
(contract number N N201 420639).


\begin{thebibliography}{GLSS}

\bibitem{AK1} A.~Altman, S.~Kleiman, Compactifying the Picard scheme,
\emph{Adv. Math.} {\bf 35} (1980), 50--112.

\bibitem{AK2} A.~Altman, S.~Kleiman, Compactifying the Picard scheme II,
\emph{Amer. J. Math.} {\bf 101} (1979), 10--41.


\bibitem{Bh}
U. N. Bhosle, Tensor fields and singular principal bundles, \emph{
Int. Math. Res. Not.} {\bf 57} (2004), 3057-3077.

\bibitem{Fu}
W. Fulton,  \emph{Intersection theory}, Ergebnisse der Mathematik
und ihrer Grenzgebiete (3) [Results in Mathematics and Related
Areas (3)], {\bf 2} Springer-Verlag, Berlin, 1984.


\bibitem{GLSS1}
T.~G\'omez, A.~Langer, A.~Schmitt, I.~Sols, Moduli spaces for
principal bundles in arbitrary characteristic, \emph{Adv. Math.}
{\bf 219} (2008),  1177-1245.


\bibitem{GLSS2}
T.~G\'omez, A.~Langer, A.~Schmitt, I.~Sols,   Moduli spaces for
principal bundles in large characteristic, in Teichm\"uller theory
and moduli problem, 281-371, \emph{Ramanujan Math. Soc. Lect.
Notes Ser.} {\bf 10}, Ramanujan Math. Soc., Mysore, 2010.


\bibitem{GS}
T.~G\'omez, I.~Sols, Stable tensors and moduli space of orthogonal
sheaves, preprint, {\tt arXiv: math/0103150}.


\bibitem{Ha2} R.~Hartshorne, Local cohomology,
A seminar given by A. Grothendieck, Harvard University, Fall,
1961. \emph{Lecture Notes in Mathematics} {\bf 41},
Springer-Verlag, Berlin-New York 1967.

\bibitem{HL} D.\ Huybrechts, M.\ Lehn, \it The geometry of moduli
spaces of sheaves\rm, Second edition. Cambridge Mathematical
Library. Cambridge University Press, Cambridge, 2010.

\bibitem{Ko} J. Koll\'ar, \emph{Rational curves on algebraic varieties},
Ergebnisse der Mathematik und ihrer Grenzgebiete.
3. Folge. A Series of Modern Surveys in Mathematics {\bf 32}.
Springer-Verlag, Berlin, 1996.


\bibitem{La1} A.\ Langer,  Semistable sheaves in positive characteristic,
\emph{Ann. of Math.} \bf 159 \rm (2004), 251--276.

\bibitem{La2} A.\ Langer,  Moduli spaces of sheaves in mixed characteristic,
\emph{Duke Math. J.} {\bf 124} (2004), 571--586.

\bibitem{La3} A.\ Langer, Semistable principal $G$-bundles in positive characteristic,
\emph{Duke Math. J.} {\bf 128} (2005), 511--540.

\bibitem{Ma1} M. Maruyama, Moduli of stable sheaves. I, \emph{J. Math. Kyoto Univ.} {\bf 17} (1977), 91--126. 

\bibitem{Ma2} M. Maruyama, Moduli of stable sheaves. II, \emph{J. Math. Kyoto Univ.} {\bf 18} (1978), 557--614. 


\bibitem{Sch2} A.~Schmitt, Singular principal bundles over
higher-dimensional manifolds and their moduli spaces, \emph{Int.\
Math.\ Res.\ Not.} \bf 2002:23 \rm (2002), 1183--1209.

\bibitem{Sch3} A.~Schmitt, A closer look at
semistability for singular principal bundles, \emph{Int.\ Math.\
Res.\ Not.} \bf 2004:62 \rm (2004), 3327--3366.

\bibitem{Sch4}
A. Schmitt, Moduli spaces for semistable honest singular principal
bundles on a nodal curve which are compatible with degeneration. A
remark on U. N. Bhosle's paper: ``Tensor fields and singular
principal bundles'', \emph{Int. Math. Res. Not.} {\bf 23} (2005),
1427-1437.


\bibitem{Sch4/5} A. Schmitt, Moduli spaces for principal bundles,
in Moduli spaces and vector bundles, 388-423, \emph{London Math.
Soc. Lecture Note Ser.} {\bf 359}, Cambridge Univ. Press,
Cambridge, 2009.

\bibitem{Sch5}
A. Schmitt, \emph{Geometric invariant theory and decorated
principal bundles}, Zurich Lectures in Advanced Mathematics,
European Mathematical Society (EMS), Z\"urich, 2008.


\bibitem{Si} C. Simpson, 
Moduli of representations of the fundamental group of a smooth projective variety. I,
\emph{Inst. Hautes \'Etudes Sci. Publ. Math.} {\bf 79} (1994), 47--129. 


\bibitem{So} Ch. Sorger, Theta-caract\'eristiques des courbes trac\'ees sur
une surface lisse, \emph{J. Reine Angew. Math.} {\bf 435} (1993),
83--118.

\end{thebibliography}
\end{document}